\def\normo#1{\left\|#1\right\|}
\def\aabs#1{\left|#1\right|}
\def\brk#1{\left(#1\right)}
\def\norm#1{\|#1\|}
\def\jb#1{\langle#1\rangle}
\newcommand{\R}{{\mathbb R}}
\newcommand{\C}{{\mathbb C}}
\newcommand{\Z}{{\mathbb Z}}
\newcommand{\ft}{{\mathcal{F}}}
\newcommand{\Hl}{{\mathcal{H}}}
\newcommand{\les}{{\lesssim}}
\newcommand{\ges}{{\gtrsim}}
\newcommand{\Lr}{{\mathcal{L}}}
\def\jb#1{\langle#1\rangle}
\def\norm#1{\|#1\|}
\def\normo#1{\left\|#1\right\|}
\def\aabs#1{\left|#1\right|}
\newcommand{\F}{\mathcal{F}}
\newcommand{\cP}{\mathcal{P}}
\newcommand{\cir}{\mathbb{S}}
\newcommand{\al}{\alpha}
\newcommand{\ga}{\gamma}
\newcommand{\e}{\varepsilon}
\newcommand{\om}{\omega}
\newcommand{\la}{\lambda}
\newcommand{\te}{\theta}
\newcommand{\x}{\xi}
\newcommand{\y}{\eta}
\newcommand{\ro}{\rho}
\newcommand{\ve}{\varepsilon}
\newcommand{\De}{\Delta}
\newcommand{\Des}{\Delta_\omega}
\newcommand{\Om}{\Omega}
\newcommand{\p}{\partial}
\newcommand{\na}{\nabla}
\newcommand{\re}{\mathop{\mathrm{Re}}}
\newcommand{\lec}{\lesssim}
\newcommand{\I}{\infty}
\newcommand{\LR}[1]{{\langle #1 \rangle}}
\newcommand{\EQ}[1]{\begin{equation}\begin{split} #1 \end{split}\end{equation}}
\newcommand{\Del}[1]{}
\newcommand{\CAS}[1]{\begin{cases} #1 \end{cases}}
\newcommand{\pt}{&}
\newcommand{\pr}{\\ &}
\newcommand{\pq}{\quad}
\numberwithin{equation}{section}
\newtheorem{thm}{Theorem}[section]
\newtheorem{cor}[thm]{Corollary}
\newtheorem{lem}[thm]{Lemma}
\theoremstyle{remark}
\newtheorem{rem}{Remark}
\theoremstyle{remark}
\theoremstyle{definition}
\begin{document}

\title[Zakharov system]{Generalized Strichartz estimates and scattering 
for 3D Zakharov system}
\author[Z. Guo]{Zihua Guo}
\address{LMAM, School of Mathematical Sciences, Peking
University, Beijing 100871, China}
\email{zihuaguo@math.pku.edu.cn}

\author[S. Lee]{Sanghyuk Lee}
\address{Department of Mathematical Sciences, Seoul National University, Seoul 151-747, Korea}
\email{shklee@snu.ac.kr}

\author[K. Nakanishi]{Kenji Nakanishi}
\address{Department of Mathematics, Kyoto University, Kyoto 606-8502,
Japan}
\email{n-kenji@math.kyoto-u.ac.jp}

\author[C. Wang]{Chengbo Wang}
\address{Department of Mathematics, Zhejiang University, Hangzhou 310027, China}
\email{wangcbo@gmail.com}

\begin{abstract}
We obtain scattering for the 3D Zakharov system with non-radial
small data in the energy space with angular regularity of degree
one. The main ingredient is a generalized Strichartz estimate for
the Schr\"odinger equation in the space of $L^2$ angular
integrability.

\end{abstract}

\maketitle

\section{Introduction}

The aim of this paper is twofold. We firstly obtain generalized
Strichartz estimates for radial dispersive equations. Secondly,
making use of these estimates we prove scattering for the 3D
Zakharov system with non-radial initial data.

\subsubsection*{Strichartz estimates} To begin with, let us consider the following
Schr\"odinger-type dispersive equations
\begin{align}\label{eq:schr}
i\partial_{t}u+D^a u=0,\ u(0,x)=f(x)
\end{align}
where $u(t,x):\R\times \R^{d}\to \C$, $D=\sqrt{-\Delta}$, $a>0$. Two
typical examples are the wave equation ($a=1$) and the Schr\"odinger
equation ($a=2$). The space time estimates which are called
Strichartz estimates address the estimates
\begin{align}\label{eq:striest}
\norm{e^{itD^a}P_0f}_{L_t^qL_x^p}\les \norm{f}_{L^2}
\end{align}
where $\widehat{P_0 f}\approx 1_{|\xi|\sim 1}\hat{f}$ (See the end
of this section for the precise definition).  Strichartz \cite{Str}
proved \eqref{eq:striest} for the case $q=p$ by drawing connection
between the estimate \eqref{eq:striest} and Fourier restriction
estimate which is known as Tomas-Stein theorem \cite{Tom} (also see
\cite[p.364-369]{Stein2} and references therein). Since then, the
estimates were substantially extended by various authors, e.g.
\cite{GiV95,LS} for $a=1$, and \cite{GV,Yaj} for $a=2$. It is now
well-known (see \cite{KT}) that \eqref{eq:striest} holds whenever
the following admissible condition is satisfied:
\begin{align}
\mbox{AP(a)}:
\begin{cases}
2\leq q,p\leq \infty,\frac{1}{q}\leq \frac{d-1}{2}(\frac{1}{2}-\frac{1}{p}), (q,p,d)\ne (2,\infty,3); \quad a=1,\\
2\leq q,p\leq \infty,\frac{1}{q}\leq
\frac{d}{2}(\frac{1}{2}-\frac{1}{p}), (q,p,d)\ne (2,\infty,2); \quad
a\neq 1.
\end{cases}
\end{align}

If the function $f$ is assumed to be radially symmetric, as expected
naturally \eqref{eq:striest} holds for a wider range of $(q,p)$ than
AP(a) (for example, see \cite{KlMa93, Sogge, FaWa2, Shao}). The optimal
range for \eqref{eq:striest}  under radial symmetry assumption  is
known except one endpoint case ($d\geq 2$). Namely,  if $f$ is
radial, then \eqref{eq:striest} holds if the following condition
holds:
\begin{align}
\mbox{RAP(a)}:
\begin{cases}
2\leq q,p\leq \infty,\frac{1}{q}<(d-1)(\frac{1}{2}-\frac{1}{p}) \mbox{ or } (q,p)=(\infty,2); \quad a=1,\\
2\leq q,p\leq \infty,\frac{1}{q}\leq
(d-\frac{1}{2})(\frac{1}{2}-\frac{1}{p}), (q,p)\ne
(2,\frac{4d-2}{2d-3}); \quad a\neq 1.
\end{cases}
\end{align}
The sharp range was obtained in \cite{GuoWang} except some endpoints
when $a>1$ and the remaining  endpoint estimates were later obtained
in \cite{CL, Ke} independently. See \cite{CL} for the estimates for
dispersive equations defined by more general pseudo-differential
operators.

Inspired by  the result for radial functions, one may try to  find a
weaker variant of Strichartz estimate which is valid for non-radial
functions and has the wider admissible range. There are two notable
approaches in this direction. The first  is to consider the estimate
with additional angular regularity
\begin{align}\label{eq:striestang}
\norm{e^{itD^a}P_0f}_{L_t^qL_x^p}\les \norm{f}_{H_\omega^{0,s}}
\end{align}
in which some angular regularity is traded off by the extension of
admissible range. See the end of this section for the definition of
$H_\omega^{0,s}$. When $a=1$, the estimate with almost sharp
regularity was obtained in \cite{Ster} (also see \cite{FaWa},
\cite{JWY} and \cite{CL}). When $a\neq 1$, in  \cite{CL} the authors
obtained \eqref{eq:striestang} for some $s>0$ and $(q,p)$ satisfying
RAP(a). However  the problem of obtaining \eqref{eq:striestang} with
the optimal regularity is still open. The latter
 is to consider the estimate with weaker angular
integrability, namely
\[
\norm{e^{itD^a}P_0f}_{L_t^q\Lr_{\rho}^pL_\omega^s}\les
\norm{f}_{L_x^2}
\]
for $s<p$.  Here the norm $L_t^q\Lr_{\rho}^pL_\omega^s$ for function
$u(t,x)$ on $\R\times \R^d (d\geq 2)$ is defined as follows
\[\norm{u}_{L_t^q\Lr_{\rho}^pL_\omega^s}=\brk{\int_\R\bigg[\int_0^\infty\bigg|\int_{\cir^{d-1}}
|u(t,\rho
x')|^sd\omega(x')\bigg|^{\frac{p}{s}}\rho^{d-1}d\rho\bigg]^{\frac{q}{p}}dt}^{1/q}.\]
Since $f$ is assumed to be in $L^2$ spaces, in view of orthogonality
of spherical harmonics the $L^2$ estimate
\begin{align}\label{eq:striestwea}
\norm{e^{itD^a}P_0f}_{L_t^q\Lr_{\rho}^pL_\omega^2}\les
\norm{f}_{L_x^2}
\end{align}
is most convenient to work with. These type of norm was used in
\cite{Tao2} to obtain the endpoint case of Strichartz estimate for
2D Schr\"odinger (see  \cite{MaNaNaOz05} for 3D wave equation) which
is not allowed in the usual  mixed norm spaces. For the wave
equation ($a=1$), it was known that \eqref{eq:striestwea} also holds
for RAP(1) pairs (see \cite{SmSoWa12, JWY, CL}). However, when
$a\neq 1$, as far as the authors know, it seems that
\eqref{eq:striestwea} is known only for some RAP(1) pair with
additional condition $q\geq p$ (see Theorem 1.7 in \cite{JWY}).

The first purpose of this paper is to consider the generalized
estimates of the type \eqref{eq:striestwea} for the case $a\neq 1$.
The other motivation is from the recent study of Zakharov system.
From the viewpoint of application, the estimate
\eqref{eq:striestwea} works better than \eqref{eq:striestang},
because there is no loss of angular regularity. The first result of
this paper is the following.

\begin{thm}\label{thm1}
Let $a>1, d\geq 3$ and $p(d)=\frac{6d-7+\sqrt{4d^2+4d-7}}{4d-7}$.
Then the estimate \eqref{eq:striestwea} holds if
\begin{align}\label{eq:Schrangular}
2\leq q,p\leq \infty,\quad \frac{1}{q}<
\frac{p(d)}{p(d)-2}(\frac{1}{2}-\frac{1}{p}) \mbox{ or }
(q,p)=(\infty,2).
\end{align}
\end{thm}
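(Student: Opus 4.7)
The plan is to combine a spherical harmonic decomposition, which interacts well with the $L_\omega^2$ angular norm, with uniform-in-mode Strichartz estimates obtained via Bessel function analysis.

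First, I would decompose $f=\sum_{k\ge 0}f_k$ into spherical harmonics of degree $k$. Since both $P_0$ and $e^{itD^a}$ commute with rotations, the decomposition is preserved under the flow and distinct modes are pointwise orthogonal in $L_\omega^2(\cir^{d-1})$. The hypothesis $p,q\ge 2$ then allows Minkowski's inequality to be applied in the mixed norm $L_t^{q/2}\Lr_\rho^{p/2}$:
\[
\normo{e^{itD^a}P_0 f}_{L_t^q\Lr_\rho^p L_\omega^2}^2 \les \sum_k \normo{e^{itD^a}P_0 f_k}_{L_t^q\Lr_\rho^p L_\omega^2}^2.
\]
It therefore suffices to establish $\normo{e^{itD^a}P_0 f_k}_{L_t^q\Lr_\rho^p L_\omega^2}\les \norm{f_k}_{L^2}$ with an implied constant independent of $k$.

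Second, on each angular subspace the propagator reduces to a one-dimensional oscillatory integral in the radial variable, built from Bessel functions $J_{\nu_k}$ of order $\nu_k=k+\frac{d-2}{2}$. A $TT^*$ argument on this radial operator reduces the mode estimate to a bound on the convolution kernel
\[
K_k(t,\rho,r)=\int_0^\I e^{it\la^a}\phi(\la)^2 (\la\rho)^{-\frac{d-2}{2}}J_{\nu_k}(\la\rho)(\la r)^{-\frac{d-2}{2}}J_{\nu_k}(\la r)\,\la^{d-1}\,d\la,
\]
where $\phi$ is the bump defining $P_0$. I would then analyse $K_k$ using the three-regime asymptotics of $J_{\nu_k}$: exponential decay for $\la\rho\ll\nu_k$, Airy-type transitional behaviour for $\la\rho\sim\nu_k$, and oscillatory with amplitude $(\la\rho)^{-1/2}$ for $\la\rho\gg\nu_k$, combined with stationary-phase bounds on the $\la$-integral, to obtain pointwise bounds on $K_k$ uniform in $k$.

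Third, I would translate these kernel bounds into the target norm via Young/Hardy-Littlewood-Sobolev inequalities on the half-line with measure $\rho^{d-1}d\rho$. The endpoint $(q,p)=(\I,2)$ comes free from $L^2$ conservation; a non-trivial $L^{p'}\to L^p$ estimate is then required at one pair $(q_0,p_0)$ lying just beneath the critical line $\frac{1}{q}=\frac{p(d)}{p(d)-2}(\frac{1}{2}-\frac{1}{p})$, and interpolation between the two endpoints yields the full range \eqref{eq:Schrangular}. The number $p(d)$ itself emerges from optimising, in the kernel estimate, the balance between the transitional Airy regime (which carries a $\nu_k^{-1/3}$ decay) and the fully oscillatory tail; this balance encodes the quadratic relation $(4d-7)p^2-2(6d-7)p+8(d-1)=0$, whose positive root is precisely $p(d)$.

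The main obstacle is the transitional regime $\la\rho\sim\nu_k$, where neither the non-oscillatory nor the fully oscillatory Bessel asymptotic dominates and the standard stationary-phase estimate loses a factor that would trap the argument inside the classical admissible range. A careful Airy-type analysis of $J_{\nu_k}$ uniform in $k$, together with a suitable decomposition of the $(\rho,r)$ half-plane, is what forces the critical exponent to be $p(d)$ and allows the spherical summation to close without any $k$-loss.
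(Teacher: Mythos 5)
Your overall architecture (spherical harmonic decomposition, reduction via Minkowski to uniform-in-$\nu$ estimates for a one-dimensional Bessel-type operator, $TT^*$ and kernel estimates, interpolation with the trivial $(\infty,2)$ endpoint) coincides with the paper's, and the first reduction step is correct. However, there is a genuine gap in how you locate the exponent $p(d)$, and it is not a matter of presentation: the scheme you describe would give a strictly smaller admissible range.

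You attribute $p(d)$ to "the balance between the transitional Airy regime (which carries a $\nu_k^{-1/3}$ decay) and the fully oscillatory tail," i.e.\ to a transitional window of width $\sim\nu^{1/3}$ where $|J_\nu|\sim\nu^{-1/3}$. In the paper, the width $\lambda$ of the transitional window is left as a \emph{free} parameter subject only to $\lambda\gtrsim R^{1/3}$, and the three pieces give (for $q=2$, on a dyadic annulus $r\sim R$) bounds roughly $\lambda^{1/4}R^{-1/4}$ from the transition, $\lambda^{-\frac14(1-\frac2p)}$ from the oscillatory tail, and lower-order terms. Balancing these yields $\lambda=R^{p/(2(p-1))}$, which for $p$ near $p(3)\approx 3.48$ is about $R^{0.70}$, much larger than the Airy scale $R^{1/3}$. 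Inserting this optimal $\lambda$ into the bookkeeping against the prefactor $R^{\frac{d-1}{p}-\frac{d-2}{2}}$ gives precisely the inequality
\[
\frac{d-1}{p}-\frac{d-2}{2}+\frac{p}{8(p-1)}-\frac14<0,
\]
which is equivalent to $(4d-7)p^2-2(6d-7)p+8(d-1)>0$, i.e.\ $p>p(d)$ — so your quadratic is the right one, but it arises from optimising the free width $\lambda$, not from the Airy width. If instead one fixes $\lambda\sim R^{1/3}$ as your description of the "Airy regime" suggests, the dominant loss becomes $R^{-\frac1{12}(1-\frac2p)}$, and the resulting constraint in $d=3$ is $p>26/7\approx 3.71$, strictly worse than $p(3)\approx 3.48$. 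So the flexible window, not the Airy scale, is the essential idea, and without it the argument does not close on the claimed range.

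Two further things you leave unaddressed that the paper has to confront: (i) the error term in the Debye-type asymptotic on the oscillatory side contributes a factor $R^{-1/p}$, and this is the sole reason $d\ge 3$ is imposed — your outline never explains why the theorem excludes $d=2$; and (ii) the hypothesis $a>1$ is used through the phase curvature observation (if $|\partial_\rho\phi|\ll|t|$ then $|\partial_\rho^2\phi|\gtrsim|t|$), which fails for $a<1$; your proposal does not use $a>1$ anywhere, so it cannot distinguish the case in question from $a\le 1$ where the conclusion is false.
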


For the Schr\"odinger case $a=2$, our results are new. The range is
strictly contained in RAP(2), but wider than RAP(1) which is crucial
for the application to Zakharov system. For example, for $d=3$,
$p(3)\approx 3.48$, then $(2,4-)$ satisfies the condition.

Now we sketch the ideas in proving Theorem \ref{thm1}. 
We first use spherical harmonic expansion of $L^2(\cir^{d-1})$ to
reduce the estimates \eqref{eq:striestwea} to the uniform
boundedness of one-dimensional oscillatory integral operators
associated with Bessel functions of different orders.  To show
\eqref{eq:striestwea} we need to obtain the estimates which are
uniform along the orders of Bessel functions. For the wave case
$a=1$, uniform decay estimates of $J_\mu$ are sufficient. However,
to get \eqref{eq:striestwea} on the range wider than RAP(1) for the
case $a>1$, we need to exploit the oscillatory effect due to the
non-vanishing second derivative ($a>1$). For this purpose we split
Bessel function $J_\mu$ of order $\mu$ into two parts so that
$J_\mu=J_{\mu}^M+J_{\mu}^E$ (see \eqref{eq:Besselint}). For the
error term $J_{\mu}^E$, one has better uniform decay estimates, and
for the main term $J_{\mu}^M$ we basically rely on $TT^*$ method and
need to obtain uniform  kernel estimates for which  we  carry out
rather delicate analysis based on the stationary phase method.

\subsubsection*{Scattering for 3D Zakharov system} We now consider
the scattering problem for the 3D Zakharov system which  was
introduced by Zakharov \cite{Zak} as a mathematical model for the
Langmuir turbulence in unmagnetized ionized plasma:
\begin{align}\label{eq:Zak}
 \CAS{ i\dot u - \De u = nu,\\
   \ddot n/\al^2 - \De n = -\De|u|^2,}
\end{align}
with the initial data
\begin{align}
u(0,x)=u_0,\, n(0,x)=n_0,\,\dot n(0,x)=n_1,
\end{align}
where $(u,n)(t,x):\R^{1+3}\to\C\times\R$, and $\al>0$ denotes the
ion sound speed. It preserves $\|u(t)\|_{L^2_x}$ and the energy \EQ{
 E=\int_{\R^3}|\na u|^2+\frac{|D^{-1}\dot n|^2/\al^2+|n|^2}{2}-n|u|^2 dx}
The natural energy space for initial data is
\EQ{\label{eq:indata}
 (u_0,n_0,n_1)\in H^1(\R^3)\times L^2(\R^3)\times \dot H^{-1}(\R^3).}

Local wellposedness (without symmetry) was well understood. The
well-posedness in the energy space was proved in \cite{BoCo} for $d
= 2, 3$ and in \cite{GTV} for $d=1$, and in weighted Sobolev space
in \cite{KPV}. Unconditional uniqueness for 3D in energy space was
shown in \cite{MN2}. Improvement to the critical regularity was
obtained in \cite{GTV,BHHT} for $d=1,2$, and to the full subcritical
regularity in \cite{GTV,BeHe} for $d\geq 3$.  Global well-posedness
with small norm in energy space was proved in \cite{BoCo}. In
\cite{Takaoka,Kishi} the well-posedness for the system on the torus
was studied.  See \cite{SW,OT,MN} results on the subsonic limit to
NLS (as $\alpha\to \infty$). Concerning the long time and blow-up
behavior, Merle \cite{Merle} obtained blow-up in finite or infinite
time for negative energy, and the scattering was studied in
\cite{Shimo,GV2,OT2}.

Under radial symmetry  assumption  the scattering for 3D Zakharov
system with small energy was shown in \cite{GN} and global dynamics
below ground state was obtained in \cite{GNW}.  Very recently, the
scattering for non radial case was obtained by Hani-Pusateri-Shatah
\cite{HPS} under the assumption that the initial data are small
enough and have sufficient regularity and decay.

Here we take a different direction by considering  initial data with
additional angular regularity.  Compared with the condition imposed
for the scattering result in \cite{HPS}, our condition on the
initial data is much weaker in view of the simple embedding relation
between angular regularity and weighted Sobolev space. Moreover, our
result contains the existence of the wave operator, and hence the
scattering operator is constructed. The following is our second
result.

\begin{thm}\label{thm2}
Assume $\norm{(u_0,n_0,n_1)}_{H_\omega^{1,1}\times
H_\omega^{0,1}\times \dot H_\omega^{-1,1}}=\e$ for $\e>0$
sufficiently small. Then the global solution $(u,n)$ to
\eqref{eq:Zak} belongs to $C_t^0H_\omega^{1,1}\times
C_t^0H_\omega^{0,1}\cap C_t^1\dot{H}_\omega^{-1,1}$, and scatters in
this space: there exists $(u_{0,\pm},n_{0,\pm},n_{1,\pm})\in
H_\omega^{1,1}\times H_\omega^{0,1}\times \dot H_\omega^{-1,1}$ such
that
\begin{align}\label{eq:thmscat}
\lim_{t\to \pm
\infty}\norm{(u,n,\dot{n})-(u_\pm^l,n_\pm^l,\dot{n}_\pm^l)}_{H_\omega^{1,1}\times
H_\omega^{0,1}\times \dot H_\omega^{-1,1}}=0
\end{align}
where $(u_\pm^l,n_\pm^l)$ is the solution to the linear Zakharov
system with the initial datum $(u_{0,\pm},n_{0,\pm},n_{1,\pm})$.
Moreover, for any $(u_{0,\pm},n_{0,\pm},n_{1,\pm})\in
H_\omega^{1,1}\times H_\omega^{0,1}\times \dot H_\omega^{-1,1}$ with
sufficiently small norm, there exists a unique global solution
$(u,n)$ to \eqref{eq:Zak} such that \eqref{eq:thmscat} holds.
\end{thm}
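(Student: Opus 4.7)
The plan is to run a standard small-data fixed point argument in a resolution space built from Strichartz norms with $L^2$ angular integrability, using Theorem~\ref{thm1} as the key non-radial input. First, setting $n_\pm = n \pm (i\al D)^{-1}\dot n$ decouples \eqref{eq:Zak} into three inhomogeneous dispersive equations with Duhamel formulae
\begin{align*}
u(t) &= e^{it\De}u_0 - i\int_0^t e^{i(t-s)\De}(nu)(s)\,ds, \\
n_\pm(t) &= e^{\mp it\al D}n_\pm(0) \mp i\al\int_0^t e^{\mp i(t-s)\al D}D|u|^2(s)\,ds,
\end{align*}
and $n=(n_+ + n_-)/2$. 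The angular derivatives defining $H_\omega^{\cdot,1}$ commute with $-\De$, $D$ and hence with $e^{it\De}$ and $e^{\mp it\al D}$, and satisfy a Leibniz rule on products, so angular regularity of the data propagates under the linear flow and distributes across the nonlinearities with no loss.

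The resolution norms $X$ for $u$ and $Y$ for $n_\pm$ I would build from the mixed norms $L_t^q \Lr_\rho^p L_\omega^2$. For $d=3$ and $a=2$, since $p(3)\approx 3.48$, Theorem~\ref{thm1} makes pairs $(q,p)$ with $q=2$ and $p$ slightly below $4$ admissible; this range lies strictly outside RAP(1) and is essential since the wave nonlinearity carries a full derivative. For the wave half-waves $n_\pm$ the Strichartz estimates in $L^2$-angular integrability are available on the full RAP(1) range by the results cited in the introduction. The resolution norms will combine an $L_t^\infty H_\omega^{\cdot,1}$ component for energy-type control with a mixed $L_t^q \Lr_\rho^p L_\omega^2$ component carrying one angular derivative, set up dyadically via Littlewood--Paley projections so that exponents can be chosen frequency-by-frequency.

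The core technical step is the pair of bilinear estimates
\[
\|nu\|_{N_u} \les \|n\|_Y \|u\|_X, \qquad \|D|u|^2\|_{N_n} \les \|u\|_X^2,
\]
where $N_u, N_n$ are spaces dual to Strichartz through which the Duhamel integrals map back into $X,Y$. These are obtained by H\"older in $(t,\rho,\omega)$ together with angular Leibniz and a standard high/low frequency case analysis placing the derivative in $D|u|^2$ on the higher-frequency factor. The wider admissible pair $(2,4-)$ supplied by Theorem~\ref{thm1} is exactly what makes these bounds close, replacing the radial pair $(2,4)$ used in \cite{GN}. The nonlinear map is then a contraction on a ball of radius $O(\e)$ in $X\times Y$, giving the unique global solution in the claimed class. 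Scattering and the wave operators follow in the standard way: the bilinear bounds show the Duhamel integrals are Cauchy in the energy norm as $t\to\pm\infty$, and running the same fixed-point argument with the integral based at $t=\pm\infty$ yields the wave operator statement.

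The main obstacle is the bilinear estimate for $\De|u|^2$: in three dimensions this derivative nonlinearity is at the very edge of what Strichartz can absorb, and in the non-radial setting the usual Keel--Tataru admissible range is insufficient. The generalized Strichartz estimate of Theorem~\ref{thm1} is designed precisely to overcome this, by transferring one angular derivative into an $L^2$-angular framework and thereby recovering a pair strictly outside RAP(1). Once this estimate is in hand, the scattering proof should be a refinement of the radial argument of \cite{GN}, with every space augmented by one order of angular regularity and every Strichartz norm replaced by its $L^2$-angular counterpart.
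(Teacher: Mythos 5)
Your proposal omits the key structural ingredient of the paper's proof: the normal form (Shatah-type) transformation. You propose to run a direct contraction argument on the Duhamel formulae for $u$ and $n_\pm$, closing the bilinear estimates for $nu$ and $D|u|^2$ by H\"older, an angular Leibniz rule, and a high/low frequency split, with the wider range $(2,4-)$ from Theorem~\ref{thm1} being ``exactly what makes these bounds close.'' That is not the case. The generalized Strichartz estimate widens the admissible range, but it does not eliminate the derivative loss in the non-resonant high--low interaction $(Nu)_{XL}$ in the Schr\"odinger equation: there $N$ sits at high frequency, the output inherits that frequency, and bounding $\int_0^t S(t-s)(Nu)_{XL}\,ds$ in $H^1$ forces a derivative onto $N$, which is only controlled in $L^2$. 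No choice of Strichartz pair, angular-$L^2$ or otherwise, repairs that loss. The paper handles this precisely by the normal form reduction that produces \eqref{eq:intu}--\eqref{eq:intN}: the non-resonant pieces $(Nu)_{XL}$ and $(u\bar u)_{XL+LX}$ are integrated out via the bilinear multipliers $\Omega,\tilde\Omega$ weighted by the inverse resonance function, leaving only boundary terms, cubic terms, and the resonant bilinear pieces $(Nu)_{LH+HH+\al L}$ and $D(u\bar u)_{HH+\al L+L\al}$. Theorem~\ref{thm1} is then used for these \emph{resonant} pieces (Lemmas~\ref{lem:bi1} and \ref{lem:bi2}), not as a substitute for the normal form. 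The paper states this explicitly: ``we combine \eqref{eq:striestwea} \emph{and} the normal form technique.''

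A secondary gap: you invoke ``angular Leibniz'' and ``H\"older in $(t,\rho,\omega)$'' somewhat loosely, but since the radial Strichartz norms only carry $L^2_\omega$ integrability, one cannot directly multiply two such functions. The paper's mechanism (Lemmas~\ref{lem:SO3}--\ref{lem:bilinear}) is to exploit the $SO(3)$ action to convert spherical Young/H\"older into Euclidean ones and then to use the algebra property $\Hl^1_q(\cir^2)\hookrightarrow L^\infty_\omega$ for $q>2$; this is the actual reason one order of angular regularity is required, and it is why the spaces \eqref{Strz norms1}--\eqref{Strz norms2} use $L_\omega^{2+}$ rather than $L_\omega^2$. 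Your proposal gestures at ``one order of angular regularity'' but does not identify why that order is needed or how the bilinear estimates survive the weak angular integrability, which is the main technical point beyond the normal form.
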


We now give some words for the proof of Theorem \ref{thm2}.  The
main difficulty lies in derivative loss and slow dispersion of the
wave equation together with the quadratic nonlinearity. We basically
follows the idea in \cite{GN} by making use of the generalized
Strichartz estimates \eqref{eq:striestwea}. In fact, we combine
\eqref{eq:striestwea} and the normal form technique (for example,
see \cite{Shatah} and \cite{OTT}) to capture some nonlinear
oscillations. In order to get around the weak angular integrability,
we add some angular regularity so that the resulting space in
angular variable becomes a Banach algebra.


\subsubsection*{Notations} Finally we close this section by listing the notation.

 \noindent $\bullet$  $\ft(f)$ and $\widehat{f}$  denote
the Fourier transform of $f$. For $a\geq 1$,
$S_a(t)=e^{itD^a}=\ft^{-1}e^{it|\xi|^a}\ft$, and $S(t)=S_2(t)$.

\noindent $\bullet$ $\eta: \R\to [0, 1]$ is an even, non-negative
smooth function which is supported in $\{\xi:|\xi|\leq 8/5\}$ and
$\eta\equiv 1$ for $|\xi|\leq 5/4$.

\noindent $\bullet$ For $k\in \Z$
$\chi_k(\xi)=\eta(\xi/2^k)-\eta(\xi/2^{k-1})$ and $\chi_{\leq
k}(\xi)=\eta(\xi/2^k)$.

\noindent $\bullet$ $P_k, P_{\leq k}$ are defined on $L^2(\R^d)$ by
$\widehat{P_ku}(\xi)=\chi_k(|\xi|)\widehat{u}(\xi),\,\widehat{P_{\leq
k}u}(\xi)=\chi_{\leq k}(|\xi|)\widehat{u}(\xi)$.
\smallskip

\noindent $\bullet$   $\Des$ denotes the Laplace-Beltrami operator
on the unite sphere $\cir^{d-1}$ endowed with the standard metric
$g$ measure $d\omega$ and $\Lambda_\omega=\sqrt{1-\Delta_\omega}$.

\noindent $\bullet$  For $1\leq i,j\leq n$,
$X_{ij}=x_i\partial_j-x_j\partial_i$. It is well-known that for
$f\in C^2(\R^n)$
\[\Delta_\omega(f)(x)=\sum_{1\leq i<j\leq n}X_{ij}^2(f)(x).\]
Denote $L_\omega^p=L_\omega^p(\cir^{d-1})=L^p(\cir^{d-1}:d\omega)$,
$\Hl_p^s=\Hl_p^s(\cir^{d-1})=\Lambda_\omega ^{-s}L_\omega^p$.

\smallskip

\noindent $\bullet$ $L^p(\R^d)$ denotes the usual Lebesgue space,
and $\Lr^p(\R^+)=L^p(\R^+:r^{d-1}dr)$.

\noindent $\bullet$  $\Lr_r^pL_\omega^q $ and $\Lr_r^p\Hl^s_q$ are
Banach spaces defined  by the  following norms
\[\norm{f}_{\Lr_{r}^pL_\omega^q}=\big\|{\norm{f(r\omega)}_{L_\omega^q}}\big\|_{\Lr_{r}^p},\
\norm{f}_{\Lr_{r}^p\Hl^s_q}=\big\|{\norm{f(r\omega)}_{\Hl^s_q}}\big\|_{\Lr_{r}^p}.\]

\smallskip

\noindent $\bullet$  $H^s_p$, $\dot{H}_p^s$ ($B^s_{p,q}$,
$\dot{B}^s_{p,q}$) are the usual Sobolev (Besov) spaces on $\R^d$.

\noindent $\bullet$  $\dot{B}^s_{(p,q),r}$ denotes the Besov-type
space given  by the  norm
\[\norm{f}_{\dot{B}^s_{(p,q),r}}=(\sum_{k\in \Z}2^{ksr}\norm{P_kf}_{\Lr_r^pL_\omega^q}^r)^{1/r}.\]

\noindent $\bullet$  $H^{s,\alpha}_{p,\omega}$ is the space with the
norm
$\norm{f}_{H^{s,\alpha}_{p,\omega}}=\norm{\Lambda_\omega^{\alpha}f}_{H^s_p}$,
and the spaces $\dot{H}^{s,\alpha}_{p,\omega}$,
$B^{s,\alpha}_{p,q,\omega}$, $\dot{B}^{s,\alpha}_{p,q,\omega}$, and
$\dot{B}^{s,\alpha}_{(p,q),r,\omega}$ are defined similarly.

\noindent $\bullet$  For simplicity, we denote
$H^{s,\alpha}_{\omega}=H^{s,\alpha}_{2,\omega}$,
$\dot{H}^{s,\alpha}_{\omega}=\dot{H}^{s,\alpha}_{2,\omega}$,
$B^{s,\alpha}_{p,\omega}=B^{s,\alpha}_{p,2,\omega}$,
$\dot{B}^{s,\alpha}_{p,\omega}=\dot{B}^{s,\alpha}_{p,2,\omega}$,
$\dot{B}^{s,\alpha}_{(p,q),\omega}=\dot{B}^{s,\alpha}_{(p,q),2,\omega}$.

\smallskip

\noindent $\bullet$ Let $X$ be a Banach space on $\R^d$. $L_t^qX$
denotes the space-time space on $\R\times \R^d$ with the norm
$\norm{u}_{L_t^qX}=\big\|\norm{u(t,\cdot)}_X\big\|_{L_t^q}$.

\section{Generalized Strichartz Estimates}

In this section, we prove Theorem \ref{thm1}. First, we make some
reductions. To prove \eqref{eq:striestwea}, it is equivalent to show
\begin{align}\label{eq:Stri2}
\norm{T_af}_{L_t^q\Lr_{\rho}^pL_\omega^2}\les \norm{f}_{L_x^2},
\end{align}
where
\[T_af(t,x)=\int_{\R^d}e^{i(x\xi+t|\xi|^a)}\chi_0(|\xi|)f(\xi)d\xi.\]
Now we expand $f$ by the orthonormal basis $\{Y_k^l\}$, $k\geq
0,1\leq l\leq d(k)$ of spherical harmonics with
$d(k)=C_{n+k-1}^k-C_{n+k-3}^{k-2}$, such that
\[f(\xi)=f(\rho \sigma)=\sum_{k\geq 0}\sum_{1\leq l\leq d(k)}a_k^l(\rho)Y_k^l(\sigma).\]
Using the identities (see \cite{Stein1})
\[\widehat{Y_k^l}(\rho\sigma)=c_{d,k}\rho^{-\frac{d-2}{2}}J_\nu(\rho)Y_k^l(\sigma)\]
where $c_{d,k}=(2\pi)^{d/2}i^{-k}$, $\nu=\nu(k)=\frac{d-2+2k}{2}$,
then we get
\[T_af(t,x)=\sum_{k,l}c_{d,k}T_a^\nu (a_k^l)(t,|x|)Y_k^l(x/|x|),\]
where
\[T_a^\nu(h)(t,r)=r^{-\frac{d-2}{2}}\int e^{-it\rho^a}J_\nu(r\rho)\rho^{d/2}\chi_0(\rho)h(\rho)d\rho.\]
Here $J_\nu(r)$ is the Bessel function
\begin{align*}
J_\nu(r)=\frac{(r/2)^\nu}{\Gamma(\nu+1/2)\pi^{1/2}}
\int_{-1}^1e^{irt}(1-t^2)^{\nu-1/2}dt, \ \ \nu>-1/2.
\end{align*}
Thus \eqref{eq:Stri2} becomes
\begin{align}\label{eq:Stri3}
\norm{T_a^\nu (a_k^l)}_{L_t^q\Lr_{r}^pl_{k,l}^2}\les
\norm{\{a_k^l(\rho)\}}_{\Lr_\rho^2l_{k,l}^2}.
\end{align}
To prove \eqref{eq:Stri3}, it is equivalent to show
\begin{align}\label{eq:goal}
\norm{T_a^\nu (h)}_{L_t^q\Lr_{r}^p}\les \norm{h}_{L^2},
\end{align}
with a bound independent of $\nu$, since $q,p\geq 2$.

By the classical Strichartz estimates (see the endpoint estimates in
\cite{KT, MaNaNaOz05}
), we can get $\norm{1_{r\leq 100}T_a^\nu
(h)}_{L_t^q\Lr_{r}^p}\les \norm{h}_{L^2}$. Thus it remains to show
\begin{align}\label{eq:goal2}
\norm{1_{r\gg 1}T_a^\nu (h)}_{L_t^q\Lr_{r}^p}\les \norm{h}_{L^2},
\end{align}
with a bound independent of $\nu$. For any $R\gg 1$, define
\[S_R^{\nu,a}(h)(t,r)=\chi_0\big(\frac rR\big)\int e^{-it\rho^a}J_\nu(r\rho)\chi_0(\rho)h(\rho)d\rho.\]
Then
\[\norm{1_{r\gg 1}T_a^\nu}_{L^2\to L_t^q\Lr_{r}^p}\les \sum_{j\geq 5}2^{j(\frac{d-1}{p}-\frac{d-2}{2})}\norm{S_{2^j}^{\nu,a}}_{L^2\to L_t^qL_{r}^p}.\]
Then to prove \eqref{eq:goal2}, it suffices to show for some
$\delta>0$
\begin{align}\label{eq:goal3}
\norm{S_R^{\nu,a} (h)}_{L_t^qL_{r}^p}\leq C
R^{\frac{d-2}{2}-\frac{d-1}{p}-\delta}\norm{h}_{L^2},
\end{align}
where $C$ is independent of $\nu$. By interpolation, we only need to
show \eqref{eq:goal3} for $(q,p)=(2,p)$.

In the radial case, the estimates \eqref{eq:goal} can be reduced to
\eqref{eq:goal3} with $\nu=\frac{d-2}{2}$, see \cite{GuoWang}. The
same argument in \cite{GuoWang} also works for fixed $\nu$, but with
$\nu$-dependent bound, see also \cite{CL}. The difficulty in
\eqref{eq:goal3} is to obtain a uniform bound as $\nu\to \infty$,
thus the proof in \cite{GuoWang,CL} does not work. We need to
exploit the uniform properties of the Bessel function with respect
to $\nu$. In order to do so, we use the Schl\"{a}fli's integral
representation (see p. 176, \cite{Bess}):
\begin{align}\label{eq:Besselint}
J_\nu(r)=&\frac{1}{\pi}\re\int_0^\pi e^{i(r\sin \theta-\nu
\theta)}d\theta-\frac{\sin(\nu\pi)}{\pi}\int_0^\infty
e^{-\nu\tau-r\sinh \tau}d\tau \nonumber\\
:=&J_\nu^M(r)-J_\nu^E(r).
\end{align}
Obviously, the main term in \eqref{eq:Besselint} is $J_\nu^M(r)$. First we recall the Van der Corput Lemma (see p. 334,
\cite{Stein2}):
\begin{lem}[Van der Corput]\label{lem:staph}
Suppose $\phi$ is real-valued and smooth in $(a,b)$, and that
$|\phi^{(k)}(x)|\geq 1$ for all $x\in (a,b)$. Then
\[\aabs{\int_a^b e^{i\lambda \phi(x)}\psi(x)dx}\leq c_k \lambda^{-1/k}\bigg[|\psi(b)|+\int_a^b|\psi'(x)|dx\bigg]\]
holds when (i) $k\geq 2$, or (ii) $k=1$ and $\phi'(x)$ is monotonic.
Here $c_k$ is a constant depending only on $k$.
\end{lem}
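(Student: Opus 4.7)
The plan is to prove the Van der Corput lemma by a two-stage argument: first establish the inequality when $\psi\equiv 1$ (so that the right-hand side is simply $c_k\lambda^{-1/k}$) by induction on $k$, and then bootstrap to general amplitudes via integration by parts, exploiting a primitive of $e^{i\lambda\phi}$.

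\textbf{Base case $k=1$.} Assume $\phi'$ is monotonic and $|\phi'|\ge 1$. Write
\[
\int_a^b e^{i\lambda\phi(x)}dx=\frac{1}{i\lambda}\int_a^b \frac{1}{\phi'(x)}\,\frac{d}{dx}\bigl(e^{i\lambda\phi(x)}\bigr)dx
\]
and integrate by parts. The endpoint terms contribute $O(\lambda^{-1})$ since $|1/\phi'|\le 1$. Because $\phi'$ is monotonic, so is $1/\phi'$, hence the total variation of $1/\phi'$ on $(a,b)$ equals $|1/\phi'(b)-1/\phi'(a)|\le 2$. This controls the remaining integral, yielding the bound $c_1\lambda^{-1}$ for $\psi\equiv 1$.

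\textbf{Inductive step $k\ge 2$.} Suppose the $\psi\equiv 1$ estimate holds for $k-1$ with constant $c_{k-1}$, and assume (WLOG) $\phi^{(k)}\ge 1$. Then $\phi^{(k-1)}$ is strictly increasing, hence vanishes at most once, say at $c\in[a,b]$. For $\delta>0$ let $I_\delta=(c-\delta,c+\delta)\cap(a,b)$. Outside $I_\delta$, the mean value theorem gives $|\phi^{(k-1)}(x)|\ge \delta$, so $\tilde\phi:=\phi/\delta$ satisfies $|\tilde\phi^{(k-1)}|\ge 1$ on each of the (at most two) components of $(a,b)\setminus I_\delta$. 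Applying the inductive hypothesis with parameter $\lambda\delta$ yields a contribution bounded by $2c_{k-1}(\lambda\delta)^{-1/(k-1)}$. On $I_\delta$ we use the trivial bound $2\delta$. Choosing $\delta=\lambda^{-1/k}$ balances the two contributions and produces the bound $c_k\lambda^{-1/k}$. If $\phi^{(k-1)}$ has no zero in $(a,b)$, one simply applies the inductive hypothesis directly on the whole interval.

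\textbf{Incorporating $\psi$.} Let $F(x)=\int_a^x e^{i\lambda\phi(t)}dt$. The first two steps, applied on every subinterval $(a,x)\subset(a,b)$ (the hypotheses on $\phi$ are inherited), give $\sup_{x\in[a,b]}|F(x)|\le c_k\lambda^{-1/k}$. Integration by parts now yields
\[
\int_a^b e^{i\lambda\phi(x)}\psi(x)dx=\psi(b)F(b)-\int_a^b F(x)\psi'(x)dx,
\]
and taking absolute values gives the claimed estimate. The main technical point is the inductive step: one must verify that cutting out a neighborhood of the unique zero of $\phi^{(k-1)}$ and rescaling $\phi$ by $\delta$ preserves the smoothness hypothesis on each subinterval, and that the constants $c_k$ remain independent of $\phi$, $\psi$, and $\lambda$. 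The monotonicity assumption in (ii) is used only to bound the variation of $1/\phi'$ in the $k=1$ argument; for $k\ge 2$ it is automatic because $\phi^{(k-1)}$ is monotonic by the $|\phi^{(k)}|\ge 1$ hypothesis.
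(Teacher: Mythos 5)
The paper does not prove this lemma: it is stated and immediately referenced to Stein's book (p.~334, \cite{Stein2}) as a known result. Your proof is exactly the classical argument appearing there: induction on $k$ in the amplitude-free case (integration by parts plus the monotonicity/total-variation bound on $1/\phi'$ for $k=1$; excising a $\delta$-neighborhood of the unique zero of $\phi^{(k-1)}$, rescaling, and optimizing $\delta=\lambda^{-1/k}$ for the inductive step), followed by a second integration by parts against $F(x)=\int_a^x e^{i\lambda\phi}$ to incorporate the amplitude $\psi$. The argument is correct; the only cosmetic slip is that the total variation of $1/\phi'$ on $(a,b)$ is in fact $\le 1$ (not merely $\le 2$) since $\phi'$ has a fixed sign and $|1/\phi'|\le 1$, but this does not affect the conclusion.
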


Next, we recall the uniform decay estimates for the Bessel
functions, which can be found in \cite{Bess} p. 229 (7). For
completeness, we present a proof here because we need the estimate
for the error term.

\begin{lem}[Uniform decay estimates]\label{lem:Besseles} Assume $r,\nu>10$. Then we have
\begin{align}
\aabs{\int_0^\pi e^{i(r\sin \theta-\nu \theta)}d\theta}\leq&
C(1+|r^2-\nu^2|)^{-1/4},\label{ap-eq:besseldecay1}
\end{align}
As a consequence,
\begin{align}
|J_\nu(r)|+|J_\nu'(r)|\leq&
C(1+|r^2-\nu^2|)^{-1/4},\label{ap-eq:besseldecay3}
\end{align}
and for any $R\ges 1$,
\begin{equation}\label{eq-bessel-integral}
  \int_{r\sim R}|J_\nu(r)|^2+|J_\nu'(r)|^2dr\les 1.
\end{equation}
\end{lem}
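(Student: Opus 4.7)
The plan is to apply van der Corput's lemma (Lemma~2.1) to the phase $\phi(\theta) := r\sin\theta - \nu\theta$, whose derivatives are $\phi'(\theta) = r\cos\theta - \nu$, $\phi''(\theta) = -r\sin\theta$, and $\phi'''(\theta) = -r\cos\theta$, after a case split based on the relative size of $r$ and $\nu$. Note that $\phi'$ is monotonically decreasing on $[0,\pi]$. When $\nu<r$ there is a unique critical point $\theta_0 = \arccos(\nu/r)$ with $|\phi''(\theta_0)| = r\sin\theta_0 = \sqrt{r^2-\nu^2}$; when $\nu = r$ this coalesces with $\theta=0$ and $|\phi'''(0)|=r$; when $\nu>r$ there is no critical point and $|\phi'|\geq\nu-r$ throughout.

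For \eqref{ap-eq:besseldecay1}, I distinguish three regimes. In the \emph{non-resonant} regime $\max(r,\nu)\geq 2\min(r,\nu)$, a bounded neighborhood of any critical point has $|\phi''|\sim r$ (so $k=2$ gives $\les r^{-1/2}$), while elsewhere $|\phi'|\gtrsim r+\nu$ (so $k=1$ gives $\les(r+\nu)^{-1}$); since $|r^2-\nu^2|\sim(r+\nu)^2$ here, the target $(1+|r^2-\nu^2|)^{-1/4}\sim(r+\nu)^{-1/2}$ is met. In the \emph{Airy} regime $r\sim\nu$ with $|r-\nu|\leq r^{1/3}$, I apply $k=3$ on $[0,\pi/3]$ using $|\phi'''|\geq r/2$ to obtain $\les r^{-1/3}$; this matches $(1+|r^2-\nu^2|)^{-1/4}\gtrsim r^{-1/3}$ since $|r^2-\nu^2|\les r^{4/3}$, and on $[\pi/3,\pi]$ one has $|\phi'|\gtrsim r$, so $k=1$ gives $r^{-1}$. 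The delicate regime is $r\sim\nu$ with $|r-\nu|\geq r^{1/3}$: if $\nu>r$, then $k=1$ with $|\phi'|\geq\nu-r$ yields $(\nu-r)^{-1}$, dominated by $(r(\nu-r))^{-1/4}\sim(\nu^2-r^2)^{-1/4}$ precisely because $\nu-r\geq r^{1/3}$; if $\nu<r$, I partition $[0,\pi]$ at $\theta_0/2$ and $3\theta_0/2$---a Taylor expansion of $\phi'$ about $\theta_0$ yields $|\phi'|\gtrsim r-\nu$ on the outer pieces ($k=1$ giving $(r-\nu)^{-1}$), while $|\phi''|\sim r\sin\theta_0\sim\sqrt{r^2-\nu^2}$ on the middle piece ($k=2$ giving $(r^2-\nu^2)^{-1/4}$), both controlled by the target.

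The pointwise bound \eqref{ap-eq:besseldecay3} then follows: for $J_\nu^M$ directly from \eqref{ap-eq:besseldecay1}, and for $J_\nu^E$ from $\sinh\tau\geq\tau$, which gives $|J_\nu^E(r)|\leq\int_0^\infty e^{-(\nu+r)\tau}\,d\tau = (\nu+r)^{-1}\les(1+|r^2-\nu^2|)^{-1/4}$. The derivative $J_\nu'$ is handled by differentiating \eqref{eq:Besselint} under the integral sign, which inserts a smooth bounded amplitude $i\sin\theta$ in the main term (leaving the van der Corput analysis unchanged) and a factor $-\sinh\tau$ in the error term (absorbed via $\sinh\tau\cdot e^{-r\sinh\tau}\les 1/r$). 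Finally, \eqref{eq-bessel-integral} follows by squaring \eqref{ap-eq:besseldecay3} and substituting $s = r^2-\nu^2$: $\int_{r\sim R}(1+|r^2-\nu^2|)^{-1/2}\,dr\sim R^{-1}\int_{|s|\les R^2}(1+|s|)^{-1/2}\,ds\les 1$, uniformly in $\nu$. The main obstacle throughout is obtaining estimates uniform in $\nu$, which is precisely what forces the careful case split and the Taylor expansion around $\theta_0$ in the critical regime.
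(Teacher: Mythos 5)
Your proof is correct and follows essentially the same strategy as the paper: apply the Van der Corput lemma to the phase $\phi(\theta)=r\sin\theta-\nu\theta$ after a case analysis on the relative sizes of $r$ and $\nu$, then treat the Schl\"afli error term $J_\nu^E$ by a trivial bound, and obtain the $L^2$ estimate by a change of variables. There are two organizational differences worth noting, both arguably to your advantage. First, the paper splits $[0,\pi]$ at $\pi/2$ at the outset and then, on $[0,\pi/2]$ in the regime $r<\nu<2r$, optimizes a cut-off $\ve$ to interpolate between a trivial bound and $k=2$ decay; your explicit use of $k=3$ with $|\phi'''|\ges r$ in the Airy window $|r-\nu|\leq r^{1/3}$ makes the role of the third derivative transparent, whereas the paper recovers the same $r^{-1/3}$ bound implicitly through the minimum with the trivial estimate. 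Second, and more substantively, for $J_\nu'$ the paper invokes the recurrence $2J_\nu'=J_{\nu-1}-J_{\nu+1}$, which shifts the order by one; the resulting bound $(1+|r^2-(\nu\pm1)^2|)^{-1/4}$ is not uniformly comparable to $(1+|r^2-\nu^2|)^{-1/4}$ near the turning point (take $r=\nu+1$, where the former is $\sim 1$ but the latter is $\sim\nu^{-1/4}$), so that step as written loses uniformity in $\nu$. Your approach of differentiating the Schl\"afli representation under the integral sign avoids this issue entirely: the amplitude $\sin\theta$ is bounded with bounded variation, so the identical Van der Corput analysis goes through, and the derivative of the error term is absorbed via $\sinh\tau\,e^{-r\sinh\tau}\les r^{-1}$. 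Your derivation of \eqref{eq-bessel-integral} matches the paper's.
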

\begin{proof}
We need only to show \eqref{ap-eq:besseldecay1}, since
\eqref{ap-eq:besseldecay3} follows immediately from
\eqref{ap-eq:besseldecay1}, the fact that $2
J_\nu'(r)=J_{\nu-1}(r)-J_{\nu+1}(r)$ (see p. 45 (2) in \cite{Bess})
and the following observation
\begin{align}\label{eq:besselerror}
|J_\nu^E(r)|+|(J_\nu^E)'(r)|\les (r+\nu)^{-1}.
\end{align}
We have
\[\aabs{\int_0^\pi e^{i(r\sin \theta-\nu \theta)}d\theta}\les \aabs{\int_0^{\pi/2} e^{i(r\sin \theta-\nu \theta)}d\theta}+\aabs{\int_{\pi/2}^\pi e^{i(r\sin \theta-\nu \theta)}d\theta}:=I+II.\]
Using Lemma \ref{lem:staph} and integration by part, we easily get
\begin{align}\label{eq:besselII}
II\leq& \aabs{\int_{\frac{\pi}{2}}^{\frac{\pi}{2}+1}e^{i(r\sin
\theta-\nu
\theta)}d\theta}+\aabs{\int_{\frac{\pi}{2}+1}^{\pi}e^{i(r\sin
\theta-\nu
\theta)}d\theta}\nonumber\\
\les& \min(r^{-1/2},\nu^{-1})+(1+r+\nu)^{-1}
\end{align}
which suffices to give the bound as desired. It remains to control
the term $I$. We have the trivial bound $I\les 1$. Then we show
$I\les |r^2-\nu^2|^{-1/4}$ case by case.

{\bf Case 1: $\nu>r$}. If $\nu\geq 2r$, then we have $I\les
\nu^{-1}$, which suffices to give the bound. Now we assume
$r<\nu<2r$. Fixing a $\ve>0$, then we get from the stationary phase
and integration by part that
\begin{align*}
I\les& \aabs{\int_{0}^{\ve} e^{i(r\sin \theta-\nu
\theta)}d\theta}+|\int_{\ve}^{\frac{\pi}{2}}e^{i(r\sin \theta-\nu
\theta)}d\theta|\\
\les& \min(\ve, |\nu-r|^{-1})+(r\ve)^{-1/2}\les
\ve^{1/2}|\nu-r|^{-1/2}+(r\ve)^{-1/2}.
\end{align*}
Thus setting $\ve=r^{-1/2}|r-\nu|^{1/2}$, we obtain the bound, as
desired.

{\bf Case 2: $\nu\leq r$}. We may assume further $\nu<r$. Let
$\theta_0=\arccos \frac{\nu}{r}$, then we have $\theta_0\sim
\sqrt{1-\frac{\nu^2}{r^2}}$. Denote $E=\{|\theta-\theta_0|\geq
\frac{\theta_0}{10}\}$ and $A=r^{-1/2}(r^2-\nu^2)^{1/8}$. Note that
$|r\cos\theta-\nu|\ges r\theta_0^2$ on $E\cap [0,\pi/2]$. If
$\theta_0\ges A$, then
\begin{align*}
I\les& \aabs{\int_{\{|\theta-\theta_0|<\frac{\theta_0}{10}\}\cap
[0,\pi/2]} e^{i(r\sin \theta-\nu \theta)}d\theta}+\aabs{\int_{E\cap
[0,\pi/2]} e^{i(r\sin
\theta-\nu \theta)}d\theta}\\
\les& (r\theta_0)^{-1/2}+(r\theta_0^2)^{-1}\les |r^2-\nu^2|^{-1/4},
\end{align*}
else if $\theta_0\ll A$, namely $r^{-1/2}(r^2-\nu^2)^{3/8}\ll 1$,
then
\begin{align*}
I\les& \aabs{\int_{E^c\cap [0,\pi/2]} e^{i(r\sin \theta-\nu
\theta)}d\theta}+\aabs{\int_{E\cap [0,A]} e^{i(r\sin \theta-\nu
\theta)}d\theta}+\aabs{\int_{E\cap [A,\pi/2]} e^{i(r\sin
\theta-\nu \theta)}d\theta}\\
\les& (r\theta_0)^{-1/2}+A+(rA^2)^{-1}\les |r^2-\nu^2|^{-1/4}.
\end{align*}
Thus the proof of \eqref{ap-eq:besseldecay1} is completed.
\end{proof}

We will also need to exploit the following asymptotical property of
the Bessel functions. The following lemma was obtained in
\cite{BC,BRV}, see e.g. Lemma 3 in \cite{BRV}.
\begin{lem}[Asymptotical property]\label{lem:Bessel}
Let $\nu>10$ and $r>\nu+\nu^{1/3}$, we have
\[J_\nu(r)=\frac{1}{\sqrt{2\pi}}\frac{e^{i\theta(r)}+e^{-i\theta(r)}}{(r^2-\nu^2)^{1/4}}+h(\nu,r),\]
where
\[\theta(r)=(r^2-\nu^2)^{1/2}-\nu \arccos \frac{\nu}{r}-\frac \pi 4\]
and
\[|h(\nu,r)|\les \bigg(\frac{\nu^2}{(r^2-\nu^2)^{7/4}}+\frac{1}{r}\bigg)1_{[\nu+\nu^{1/3},2\nu]}(r)+r^{-1}1_{[2\nu,\infty)}(r).\]
\end{lem}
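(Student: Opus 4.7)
The plan is to apply the method of stationary phase to the Schl\"afli representation \eqref{eq:Besselint}, writing $J_\nu(r) = J_\nu^M(r) - J_\nu^E(r)$. The exponential piece $J_\nu^E$ is already controlled by $\lesssim (r+\nu)^{-1}$ thanks to \eqref{eq:besselerror}, and this fits comfortably inside the advertised bound on $h(\nu,r)$; the entire effort thus concentrates on the oscillatory integral
\[
J_\nu^M(r) = \frac{1}{\pi}\re \int_0^\pi e^{i\phi(\te)}\,d\te, \qquad \phi(\te) = r\sin\te - \nu\te.
\]

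Since $r > \nu$, $\phi$ has a unique critical point on $(0,\pi)$, namely $\te_0 = \arccos(\nu/r) \in (0,\pi/2)$. A direct computation gives $\phi(\te_0) = \sqrt{r^2-\nu^2} - \nu\arccos(\nu/r) = \theta(r) + \pi/4$ and $\phi''(\te_0) = -r\sin\te_0 = -\sqrt{r^2-\nu^2} < 0$. The leading stationary-phase contribution is therefore
\[
\sqrt{\tfrac{2\pi}{|\phi''(\te_0)|}}\, e^{i\phi(\te_0) - i\pi/4} = \sqrt{2\pi}\,(r^2-\nu^2)^{-1/4}\, e^{i\theta(r)},
\]
whose normalized real part reproduces the advertised main term $\frac{1}{\sqrt{2\pi}}(r^2-\nu^2)^{-1/4}(e^{i\theta(r)} + e^{-i\theta(r)})$.

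For the quantitative error I split $[0,\pi]$ into a window $I_0 = [\te_0 - \de,\te_0 + \de]$ around the critical point and its complement. Off $I_0$ one has $|\phi'(\te)| = |r\cos\te - \nu| \gtrsim \de\sqrt{r^2-\nu^2}$, and since $\phi'$ is monotone on each of $(0,\te_0)$ and $(\te_0,\pi)$, two integrations by parts (or Lemma \ref{lem:staph}) produce a contribution of size $(\de\sqrt{r^2-\nu^2})^{-2}$. On $I_0$ I Taylor-expand $\phi(\te) = \phi(\te_0) + \tfrac{1}{2}\phi''(\te_0)(\te-\te_0)^2 + R(\te)$, where $|R(\te)| \lesssim r|\te-\te_0|^3$ using $|\phi'''(\te_0)| = \nu$ and $|\phi^{(4)}| \lesssim r$. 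After the change of variables $u = |\phi''(\te_0)|^{1/2}(\te-\te_0)$ the leading Gaussian produces the principal term, and the next-order Morse-lemma correction is of order $\frac{(\phi''')^2}{(\phi'')^3}(r^2-\nu^2)^{-1/4} \sim \frac{\nu^2}{(r^2-\nu^2)^{7/4}}$. Choosing $\de \sim (r^2-\nu^2)^{-1/4}$ balances the two contributions.

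The main obstacle is the transition regime $r \in [\nu+\nu^{1/3},2\nu]$, where $\sqrt{r^2-\nu^2}$ is small and $\te_0$ sits near $0$. There one must verify that the cubic error $R$ really is subordinate to the quadratic term on $I_0$, which demands $r\de^3 \ll \sqrt{r^2-\nu^2}\,\de^2$; with the optimal choice $\de \sim (r^2-\nu^2)^{-1/4}$ this reduces to $r \lesssim (r^2-\nu^2)^{3/4}$, equivalently (using $r \sim \nu$ in this window) $r - \nu \gtrsim \nu^{1/3}$, which is precisely the hypothesis. In the complementary range $r \geq 2\nu$ the critical point is bounded away from $0$ and $\pi$, $|\phi''| \sim r$, and a single application of standard non-degenerate stationary phase yields a remainder of size $1/r$, matching the second piece of the claimed estimate for $h(\nu,r)$.
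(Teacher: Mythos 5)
The paper itself does not prove this lemma: it is quoted directly from \cite{BC,BRV} (``see e.g.\ Lemma 3 in \cite{BRV}''), so there is no in-text proof to compare against. Judged on its own terms, your sketch identifies the right starting point (Schl\"afli's representation plus stationary phase at $\te_0=\arccos(\nu/r)$, with $\phi(\te_0)=\theta(r)+\pi/4$ and $\phi''(\te_0)=-\sqrt{r^2-\nu^2}$ giving the stated main term), but the quantitative error analysis has gaps that currently prevent it from producing the claimed bound.

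First, the assertion that ``two integrations by parts (or Lemma~\ref{lem:staph}) produce a contribution of size $(\de\sqrt{r^2-\nu^2})^{-2}$'' off the window is not right. Van der Corput with $k=1$, or one integration by parts, gives $(\de\sqrt{r^2-\nu^2})^{-1}$; a second integration by parts still leaves boundary terms of that size at $\te_0\pm\de$. The correct way to improve on $(\de\sqrt{r^2-\nu^2})^{-1}$ is to observe that the leading boundary term $e^{i\phi(\te_0\pm\de)}/(i\phi'(\te_0\pm\de))$ from the off-window piece cancels, to leading order, the Gaussian tail $\int_{|u|>\de}e^{\frac{i}{2}\phi''(\te_0)u^2}\,du$ that arises when you complete the square on the window; without exhibiting this cancellation the claimed exponent $-2$ is unjustified, and the off-window contribution $(\de\sqrt{r^2-\nu^2})^{-1}\sim(r^2-\nu^2)^{-1/4}$ (with your choice $\de\sim(r^2-\nu^2)^{-1/4}$) exceeds the target bound near $r\sim 2\nu$, where the target is $\nu^{-3/2}+r^{-1}\sim\nu^{-1}$.

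Second, the step producing ``the next-order Morse-lemma correction $\sim \frac{(\phi''')^2}{(\phi'')^3}(r^2-\nu^2)^{-1/4}$'' tacitly uses that the leading cubic error $\frac{i}{6}\phi'''(\te_0)u^3$ is odd and integrates to zero against the even Gaussian over the symmetric window, so that only its square (and the $\phi^{(4)}u^4$ term) survive. As written you estimate $|R(\te)|\lesssim r|\te-\te_0|^3$ and then feed $\int_{I_0}|R|$ into $|e^{iR}-1|\lesssim|R|$; that gives the first-order term $|\phi'''|\de^4$, which is one power of $\de$ too large and does not land on $\nu^2(r^2-\nu^2)^{-7/4}$. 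You need to keep $e^{iR}-1=iR+O(R^2)$, drop the odd $iR$ piece by symmetry, and only then extract the $(\phi''')^2$ term. (Incidentally $|\phi'''(\te)|=r|\cos\te|\approx\nu$ near $\te_0$, so the bound should be $\nu|\te-\te_0|^3$, not $r|\te-\te_0|^3$, in the transition range; using $r$ is too generous when $\te_0$ is small.) Fixing both points — the tail cancellation and the odd-symmetry cancellation — is what actually produces the stated $\frac{\nu^2}{(r^2-\nu^2)^{7/4}}$ error (and, separately, the $r^{-1}$ error for $r\ge 2\nu$ where $\phi''\sim r$ and the $\phi^{(4)}$ correction dominates). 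The hypothesis $r>\nu+\nu^{1/3}$ enters exactly as you say, to guarantee $(r^2-\nu^2)^{1/4}\de\gtrsim1$ so the Gaussian tail analysis applies, but as written the error budget does not close.
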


With these uniform properties of Bessel function and the $L^2_t
L^2_r$ estimate, which is nothing but the local smoothing effect, we
are already able to obtain some new estimates.

\begin{lem}\label{prop:roughes}
Assume $a>0$. For $\nu>10$, $R\ges 1$, $2\leq p\leq \infty$
\begin{align}
\norm{S_R^{\nu,a} (h)}_{L_t^2 L_{r}^p}\les \norm{h}_{L^2}, \quad
\norm{S_R^{\nu,a} (h)}_{L_t^\infty L_{r}^2}\les \norm{h}_{L^2}.
\end{align}
\end{lem}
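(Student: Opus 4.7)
The plan is to prove the three endpoint estimates $L^\infty_t L^2_r$, $L^2_t L^2_r$, and $L^2_t L^\infty_r$ separately, and then obtain the full range $L^2_t L^p_r$ for $2 \le p \le \infty$ by Riesz--Thorin interpolation in the $r$--index. The main tools are Cauchy--Schwarz, Plancherel in $t$, the uniform $L^2$--integral estimate \eqref{eq-bessel-integral}, and (for the $L^\infty_r$ endpoint) the one-dimensional Agmon/Gagliardo--Nirenberg inequality $\|u\|_{L^\infty_r}^2 \lesssim \|u\|_{L^2_r}\|u'\|_{L^2_r}$. Throughout I write $F(r,t) = \int e^{-it\rho^a}J_\nu(r\rho)\chi_0(\rho)h(\rho)\,d\rho$, so that $S_R^{\nu,a}h(t,r) = \chi_0(r/R)F(r,t)$.

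For the $L^\infty_t L^2_r$ bound, I would apply Cauchy--Schwarz in $\rho$ pointwise in $(t,r)$ to obtain $|S_R^{\nu,a}h(t,r)|^2 \leq \chi_0(r/R)^2\|h\|_{L^2}^2 \int |J_\nu(r\rho)|^2\chi_0(\rho)^2\,d\rho$. Integrating in $r$ and swapping integrations, the whole matter reduces to $\int \chi_0(r/R)^2|J_\nu(r\rho)|^2\,dr \lesssim \rho^{-1}$ on the support of $\chi_0(\rho)$; this is immediate from the change of variables $r'=r\rho$ and \eqref{eq-bessel-integral}, since $R\rho\gtrsim 1$ when $R\gtrsim 1$ and $\rho\sim 1$. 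For the $L^2_t L^2_r$ bound I would freeze $r$ and use Plancherel in $t$: the substitution $\sigma = \rho^a$ yields
\begin{equation*}
\int |F(r,t)|^2\,dt = \frac{2\pi}{a}\int |J_\nu(r\rho)|^2\chi_0(\rho)^2|h(\rho)|^2\rho^{1-a}\,d\rho.
\end{equation*}
Integrating against $\chi_0(r/R)^2\,dr$ and applying Fubini together with the same Bessel bound closes the estimate, because $\chi_0(\rho)^2\rho^{-a}$ is bounded.

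The main obstacle is the $L^2_t L^\infty_r$ endpoint, since $L^\infty_r$ control must not destroy the $L^2_t$ gain. Applying Agmon's inequality to $u(r) = \chi_0(r/R)F(r,t)$ and then Cauchy--Schwarz in $t$ gives
\begin{equation*}
\|S_R^{\nu,a}h\|_{L^2_t L^\infty_r}^2 \lesssim \|\chi_0(r/R)F\|_{L^2_t L^2_r}\,\|\partial_r(\chi_0(r/R)F)\|_{L^2_t L^2_r},
\end{equation*}
reducing everything to $L^2_t L^2_r$ bounds. The first factor was already handled. The second splits by the product rule into $R^{-1}\chi_0'(r/R)F$, which is even better behaved than $\chi_0(r/R)F$, and $\chi_0(r/R)\partial_r F$, where $\partial_r F$ has the same form as $F$ but with $J_\nu(r\rho)$ replaced by $\rho J_\nu'(r\rho)$. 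Since $\rho\sim 1$ on $\supp\chi_0(\rho)$ and \eqref{eq-bessel-integral} also controls $\int|J_\nu'|^2\,dr$ uniformly, the Plancherel argument from the $L^2_t L^2_r$ step applies verbatim. Riesz--Thorin interpolation between the $L^2_t L^2_r$ and $L^2_t L^\infty_r$ estimates then gives $L^2_t L^p_r$ for all $2\le p\le\infty$.
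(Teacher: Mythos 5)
Your proposal is correct and follows essentially the same strategy as the paper: Plancherel in $t$ plus the uniform Bessel $L^2$ estimate \eqref{eq-bessel-integral} for the $L^2_t L^2_r$ endpoint, differentiation under the integral plus the same estimate (with $J_\nu'$) for the $L^2_t L^\infty_r$ endpoint, and interpolation. The paper handles $L^\infty_t L^2_r$ via Minkowski rather than pointwise Cauchy--Schwarz, and uses the $H^1(\R)\hookrightarrow L^\infty$ embedding rather than the multiplicative Agmon form, but these are cosmetic variants of the same argument.
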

\begin{proof}
For the first inequality, we only need to prove the estimate for
$p=2,\infty$. For $p=2$, using Plancherel's equality in $t$ and Lemma \ref{lem:Besseles}, we get
\begin{align*}
\norm{\chi_0\big(\frac rR\big)\int
e^{-it\rho^a}J_\nu(r\rho)\chi_0(\rho)h(\rho)d\rho}_{L_{r}^2L_t^2}
\sim&\norm{\chi_0\big(\frac
rR\big)J_\nu(r\rho)\chi_0(\rho)h(\rho)}_{L_{r}^2L_\rho^2}\les
\norm{h}_2.
\end{align*}
Similarly, for $p=\infty$, we have
\begin{align*}
\norm{S_R^{\nu,a} (h)}_{L_t^2L_{r}^\infty} \les&\norm{S_R^{\nu,a}
(h)}_{L_t^2L_{r}^2}+\norm{\partial_r(S_R^{\nu,a}
(h))}_{L_t^2L_{r}^2}\les \norm{h}_2.
\end{align*}
The second inequality follows immediately from Minkowski's
inequality. Thus, the proof is completed.
\end{proof}

By the lemma above and interpolation, we see that \eqref{eq:goal3}
holds for RAP(1) pairs. In the rest of this section, we will refine
the estimate for $S_R^{\nu,a}$, assuming $a=2$, by making use of the
stationary phase method. The proof for the other case $1<a<2$ is
identical. For simplicity, we write $S_R^\nu=S_R^{\nu,2}$.

Fixing $\lambda\geq 100 R^{1/3}$, we decompose $S_R^\nu
(h)=\sum_{j=1}^3S_{R,j}^\nu (h)$, where
\[S_{R,j}^\nu (h)=\chi_0\big(\frac rR\big)\int
e^{-it\rho^2}J_\nu(r\rho)\gamma_j(\frac{r\rho-\nu}{\lambda})\chi_0(\rho)h(\rho)d\rho,\]
with $\gamma_1(x)=\eta(x)$, $\gamma_2(x)=(1-\eta(x))1_{x<0}$,
and $\gamma_3(x)=(1-\eta(x))1_{x>0}$. Then as $S_R^\nu$ we have
for $j=1,2,3$, $2\leq p\leq \infty$
\begin{align}
\norm{S_{R,j}^\nu (h)}_{L_t^2 L_{r}^p}\les \norm{h}_{L^2}.
\end{align}
We will refine the estimates for $S_{R,j}^\nu$, $j=1,2,3$,
respectively.

\begin{lem}\label{lem:SR} Assume $R\ges 1, \lambda\geq 100 R^{1/3}$, $2\leq p\leq \infty$. Then
\begin{align}
\norm{S_{R,1}^\nu (h)}_{L_t^2 L_{r}^p}\les&
\lambda^{1/4}R^{-1/4}\norm{h}_{L^2},\\
\norm{S_{R,2}^\nu (h)}_{L_t^2
L_{r}^p}\les& \big((\lambda^{-1}R^{1/4})^{1-\frac{2}{p}}+R^{-1/2}\big)\norm{h}_{L^2},\\
\norm{S_{R,3}^\nu (h)}_{L_t^2 L_{r}^p}\les&
\big(\lambda^{-\frac{1}{4}(1-\frac{2}{p})}+(\lambda^{-5/4}R^{1/4})^{2/p}+R^{-1/p}\big)\norm{h}_{L^2}.
\end{align}
\end{lem}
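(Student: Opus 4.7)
The plan is to prove each of the three inequalities at the two endpoints $p=2$ and $p=\infty$ and then interpolate in $p$ (using that $L^p_r$ is an exact interpolation scale).

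For the $p=2$ endpoint, I would apply Plancherel in $t$ after the substitution $\tau=\rho^2$, which reduces the norm to
\[\norm{S_{R,j}^\nu h}_{L_t^2 L_r^2}^2 \lesssim \int_{\rho\sim 1} |h(\rho)|^2 \Bigl(\int_{r\lesssim R} |J_\nu(r\rho)|^2 \gamma_j\bigl(\tfrac{r\rho-\nu}{\lambda}\bigr)^2 dr\Bigr) d\rho.\]
Inserting the uniform bound $|J_\nu(s)|^2\lesssim (1+|s^2-\nu^2|)^{-1/2}$ from Lemma \ref{lem:Besseles}, and using $|s^2-\nu^2|\sim|s-\nu|\nu$ together with $\nu\lesssim R$ on the support, the inner $r$-integral is $\lesssim \sqrt{\lambda/R}$ in the transition zone ($j=1$), giving the $(\lambda/R)^{1/4}$ endpoint; for the bulk zones ($j=2,3$) it is uniformly $O(1)$, which recovers the trivial $p=2$ endpoint of each of the other two inequalities.

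For the $p=\infty$ endpoint with $j=1$, the 1D Sobolev inequality $\norm{f}_{L_r^\infty}\lesssim \norm{f}_{L_r^2}^{1/2}\norm{\partial_r f}_{L_r^2}^{1/2}$, combined with the $p=2$ computation applied to $\partial_r S_{R,1}^\nu$ (which only introduces an extra $\rho J_\nu'(r\rho)$, covered by Lemma \ref{lem:Besseles}, and a harmless $\lambda^{-1}\gamma_1'$), gives the claimed $\lambda^{1/4}R^{-1/4}$. For $j=2,3$ the Sobolev shortcut loses too much, so I switch to the $TT^*$ method with kernel
\[K_j(t,r,r')=\chi_0\bigl(\tfrac{r}{R}\bigr)\chi_0\bigl(\tfrac{r'}{R}\bigr)\int e^{-it\rho^2} J_\nu(r\rho) J_\nu(r'\rho)\gamma_j\bigl(\tfrac{r\rho-\nu}{\lambda}\bigr)\gamma_j\bigl(\tfrac{r'\rho-\nu}{\lambda}\bigr)\chi_0(\rho)^2 d\rho.\]
For $j=3$, I substitute the asymptotic $J_\nu(x)\approx (2\pi)^{-1/2}(x^2-\nu^2)^{-1/4}(e^{i\theta(x)}+e^{-i\theta(x)})$ from Lemma \ref{lem:Bessel} to split $K_3$ into four oscillatory integrals with phases $\Phi_{\pm,\pm}(\rho)=-t\rho^2\pm\theta(r\rho)\pm\theta(r'\rho)$; using $\theta'(x)=\sqrt{x^2-\nu^2}/x$ and $\theta''(x)=\nu^2/(x^2\sqrt{x^2-\nu^2})$, one establishes a $\nu$-uniform lower bound on $|\partial_\rho^2\Phi_{\pm,\pm}|$ on the support of the amplitude, and Van der Corput (Lemma \ref{lem:staph}) then delivers pointwise decay of $K_3(t,r,r')$ in $|t|$, which integrates to the $TT^*$ bound. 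For $j=2$ the Bessel factors are in the decay regime and only the Schr\"odinger phase $-t\rho^2$ carries oscillation; a simpler Van der Corput argument, together with a boundary contribution near $r\rho\approx \nu-\lambda$ (which is the source of the $R^{-1/2}$ summand), produces the stated bound.

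The main obstacle will be making the $K_3$ stationary-phase analysis uniform in $\nu$: the Bessel and Schr\"odinger curvatures can cancel in $\partial_\rho^2\Phi_{\pm,\pm}$, so one must separate sub-cases according to the sign choice and to whether the critical point in $\rho$ lies in the support of $\chi_0$, and perform a further dyadic decomposition in $(r\rho-\nu)/\lambda$ to control the degenerating amplitude $(r^2\rho^2-\nu^2)^{-1/4}$ against the support cutoffs. Summing the resulting dyadic pieces is what produces the refined $\lambda$-exponents stated in the lemma, after which real interpolation in $p$ between the two endpoints completes the range $2\le p\le\infty$.
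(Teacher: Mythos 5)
Your high-level plan agrees with the paper: interpolation between $p=2$ and $p=\infty$, Plancherel in $t$ plus the uniform Bessel decay for $p=2$, the 1D Sobolev trick for $j=1$ at $p=\infty$, and a $TT^*$ argument using the Barcelo--Cordoba--Ruiz--Vega asymptotics for $j=3$. The problem lies in the step you yourself flag as the main obstacle. You first assert ``a $\nu$-uniform lower bound on $|\partial_\rho^2\Phi_{\pm,\pm}|$ on the support of the amplitude,'' then concede that ``the Bessel and Schr\"odinger curvatures can cancel in $\partial_\rho^2\Phi_{\pm,\pm}$.'' The unconditional lower bound simply does not hold: with $\phi_2(\rho)=t\rho^2-\theta(r\rho)+\theta(r'\rho)$ and $\theta''>0$, the term $2t$ and the Bessel curvature term can have opposite signs and cancel, and indeed the paper explicitly uses the fact that $\partial_\rho^2\phi_2$ changes sign at most once. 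The actual mechanism is a \emph{dichotomy}: the key observation (stated in the paper) is that whenever $|\partial_\rho\phi_2|\ll|t|$, one automatically has $|\partial_\rho^2\phi_2|\gtrsim|t|$, because the Bessel contribution to $\partial_\rho^2\phi_2$ is then forced to dominate $2t$; this is exactly where $a>1$ enters and fails for $a<1$. The kernel is then split by a cutoff $\eta_0\bigl(100\,\partial_\rho\phi_2/t\bigr)$ into a near-stationary piece (Van der Corput with $k=2$ using the conditional second-derivative bound) and a far piece (non-stationary integration by parts), with a separate double integration by parts when $|t|\gg R$. Your proposed fix --- a dyadic decomposition in $(r\rho-\nu)/\lambda$ --- controls the degenerating amplitude $(r^2\rho^2-\nu^2)^{-1/4}$, but it does not localize the vanishing of $\partial_\rho^2\Phi$, which is a condition in $\rho$ determined by $(t,r,r',\nu)$ and is not aligned with dyadic scales of $(r\rho-\nu)/\lambda$; so the gap remains.

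The $j=2$ treatment is also too coarse to produce the stated constants. The bound $(\lambda^{-1}R^{1/4})^{1-2/p}+R^{-1/2}$ does not come from ``Bessel in the decay regime plus the Schr\"odinger phase.'' The paper inserts the Schl\"afli integral representation $J_\nu=J_\nu^M-J_\nu^E$, isolates the $\int_0^{\pi/2}$ portion of $J_\nu^M$, and integrates by parts once in $\theta$ against the non-stationary phase $r\rho\cos\theta-\nu$ to extract the explicit $(r\rho-\nu)^{-1}$ weight (hence the $\lambda^{-1}$), and only then runs $TT^*$ on the three resulting pieces $M_{R,2,1}^\nu,M_{R,2,2}^\nu,M_{R,2,3}^\nu$. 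The $R^{-1/2}$ summand arises from the error terms ($J_\nu^E$ and the $\int_{\pi/2}^\pi$ piece), which are $O(R^{-1})$ pointwise --- not, as you suggest, from a boundary contribution near $r\rho\approx\nu-\lambda$, which is already handled by the $j=1$ cutoff. So your sketch gets the skeleton right, but it misses the Schl\"afli decomposition and the first-versus-second-derivative dichotomy, which are precisely the two devices that make the estimates uniform in $\nu$ and produce the quantitative exponents in the lemma.
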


\begin{proof}
By interpolation, we only need to show the estimates for
$p=2,\infty$.

{\bf Step 1: estimate of $S_{R,1}^\nu$.}

As in the proof of Lemma \ref{prop:roughes}, by Lemma
\ref{lem:Besseles} we have
\begin{align*}
\norm{S_{R,1}^\nu (h)}_{L_t^2 L_{r}^2}\les
\norm{J_\nu(r)1_{|r-\nu|\les \lambda}}_2\norm{h}_2\les
\lambda^{1/4}R^{-1/4}\norm{h}_2.
\end{align*}
Similarly, we have
\begin{align*}
\norm{S_{R,1}^\nu (h)}_{L_t^2 L_{r}^\infty}\les
\lambda^{1/4}R^{-1/4}\norm{h}_2.
\end{align*}

{\bf Step 2: estimate of $S_{R,2}^\nu$.}

By the support of $\gamma_2$, we have $\nu>r\rho+\lambda$ in the
support of $\gamma_2(\frac{r\rho-\nu}{\lambda})$. Thus we use the
formula \eqref{eq:Besselint}. Without loss of generality, we assume
$J_\nu^M=\frac{1}{\pi}\int_0^{\pi}
 e^{i(r\rho\sin \theta-\nu
\theta)}d\theta$ (its conjugate part can be handled in the same
way), and decompose
\[S_{R,2}^\nu(h):=M_{R,2}^\nu(h)+E_{R,2}^\nu(h)\]
where
\begin{align*}
M_{R,2}^\nu (h)=&\chi_0\big(\frac rR\big)\int
e^{-it\rho^2}\int\bigg(\int_0^{\pi/2}
 e^{i(r\rho\sin \theta-\nu
\theta)}d\theta\bigg)\gamma_2(\frac{r\rho-\nu}{\lambda})\chi_0(\rho)h(\rho)d\rho,\\
E_{R,2}^\nu (h)=&\chi_0\big(\frac rR\big)\int
e^{-it\rho^2}\bigg(J_\nu^E(r\rho)+\int_{\pi/2}^{\pi}
 e^{i(r\rho\sin \theta-\nu
\theta)}d\theta\bigg)\gamma_2(\frac{r\rho-\nu}{\lambda})\chi_0(\rho)h(\rho)d\rho.
\end{align*}
By \eqref{eq:besselerror} and \eqref{eq:besselII}, we can easily get
for $2\leq p\leq \infty$
\begin{align*}
\norm{E_{R,2}^\nu (h)}_{L_t^2 L_{r}^p}\les R^{-1/2}\norm{h}_2.
\end{align*}

It remains to bound $M_{R,2}^\nu$. Denote
$\phi(r,\rho,\theta)=r\rho\sin \theta-\nu \theta$. Integrating by
part, we can decompose further
\begin{align*}
M_{R,2}^\nu(h)(t,r)=&\chi_0\big(\frac rR\big)\int\bigg(\frac
{e^{i\phi(r,\rho,\theta)}}{i(r\rho
\cos\theta-\nu)}\bigg|_{\theta=\pi/2}-\frac
{e^{i\phi(r,\rho,\theta)}}{i(r\rho
\cos\theta-\nu)}\bigg|_{\theta=0}\\
&-\int_{0}^{\pi/2}
 e^{i(r\rho\sin \theta-\nu
\theta)}\frac{r\rho \sin\theta}{(r\rho \cos
\theta-\nu)^2}d\theta\bigg)\gamma_2(\frac{r\rho-\nu}{\lambda})e^{-it\rho^2}\chi_0(\rho)h(\rho)d\rho\\
:=&M_{R,2,1}^\nu(h)-M_{R,2,2}^\nu(h)-M_{R,2,3}^\nu(h).
\end{align*} It suffices to prove:
for $R\ges 1$, $j=1,2,3$
\begin{align}
\norm{M_{R,2,j}^\nu (h)}_{L_t^2L_{r}^\infty}\les
\lambda^{-1}R^{1/4} \norm{h}_{L^2}.
\end{align}

Indeed, we have
\begin{align*}
M_{R,2,1}^\nu(h)=&\chi_0\big(\frac rR\big)\int\frac {e^{i(r\rho-\nu
\pi/2)}}{-i\nu}\gamma_2(\frac{r\rho-\nu}{\lambda})e^{-it\rho^2}\chi_0(\rho)h(\rho)d\rho,\\
M_{R,2,2}^\nu(h)=&\chi_0\big(\frac rR\big)\int\frac
{1}{i(r\rho-\nu)}\gamma_2(\frac{r\rho-\nu}{\lambda})e^{-it\rho^2}\chi_0(\rho)h(\rho)d\rho.
\end{align*}
First we consider $j=2$. By $TT^*$ argument, it is equivalent to
show
\[\norm{M_{R,2,2}^\nu(M_{R,2,2}^\nu)^*f}_{L_t^2L_{r}^\infty}\les \lambda^{-2}R^{1/2}\norm{f}_{L_t^2L_{r}^1}\]
where
\[M_{R,2,2}^\nu(M_{R,2,2}^\nu)^*f=\int K_2(t-t',r,r')f(t',r')dt'dr'\]
with the kernel
\[K_2(t,r,r')=\int e^{-it\rho^2}\frac
{\chi_0\big(\frac
rR\big)}{r\rho-\nu}\gamma_2(\frac{r\rho-\nu}{\lambda})\frac
{\chi_0\big(\frac
{r'}R\big)}{r'\rho-\nu}\gamma_2(\frac{r'\rho-\nu}{\lambda})\chi_0^2(\rho)d\rho.\]
It suffices to prove
\begin{align}\label{eq:K2est}
\norm{K_2}_{L_t^1L^\infty_{r,r'}}\les \lambda^{-2}R^{1/2}.
\end{align}
Denote $F_2=\frac {\chi_0\big(\frac
rR\big)}{i(r\rho-\nu)}\gamma_2(\frac{r\rho-\nu}{\lambda})\frac
{\chi_0\big(\frac
{r'}R\big)}{i(r'\rho-\nu)}\gamma_2(\frac{r'\rho-\nu}{\lambda})\chi_0^2(\rho)$.
First we have the trivial bound $|K_2|\les \lambda^{-2}$. Then by
Lemma \ref{lem:staph}, we get
\begin{align*}
|K_2(t,r,r')|\les& |t|^{-1/2} \int |\partial_\rho
F_2|d\rho\les\lambda^{-2}|t|^{-1/2}.
\end{align*}
On the other hand, if $|t|\gg R$, using integration by part twice,
get
\begin{align*}
|K_2|\les& \int
|t|^{-2}\aabs{\partial_\rho\big[\rho^{-1}\partial_\rho(\rho^{-1}F_2)\big]}d\rho\les
|t|^{-2}R\lambda^{-3}.
\end{align*}
Then eventually we have
\[|K_2|\les \lambda^{-2}|t|^{-1/2}1_{|t|\les R}+|t|^{-2}R\lambda^{-3}1_{|t|\gg R}\]
which implies the bound \eqref{eq:K2est} as desired.

For $j=1$, the proof follows in an similar and easier way as $j=2$
since $\nu\ges R$. Actually, we have
\begin{align}
\norm{M_{R,2,1}^\nu (h)}_{L_t^2L_{r}^\infty}\les
R^{-3/4}\norm{h}_{L^2}.
\end{align}
Now we consider $j=3$. Similarly, the kernel of
$M_{R,2,3}^\nu(M_{R,2,3}^\nu)^*$ is
\begin{align*}
K_3(t-t',r,r')=&\int_{0}^{\pi/2}\int_{0}^{\pi/2}\int
e^{-i(t-t')\rho^2}
 e^{i(r\rho\sin \theta-\nu
\theta)}\frac{\chi_0\big(\frac rR\big)r\rho \sin\theta}{(r\rho \cos
\theta-\nu)^2}
\gamma_2(\frac{r\rho-\nu}{\lambda})\\
&\times
 e^{-i(r'\rho\sin \theta'-\nu
\theta')}\frac{\chi_0\big(\frac {r'}R\big)r'\rho
\sin\theta'}{(r'\rho \cos
\theta'-\nu)^2}\gamma_2(\frac{r'\rho-\nu}{\lambda})\chi_0^2(\rho)d\rho
d\theta d\theta'.
\end{align*}
It suffices to prove
\[\norm{K_3}_{L_t^1L^\infty_{r,r'}}\les \lambda^{-2}R^{1/2}.\]

Denote $F_3(\rho)=\frac{\chi_0\big(\frac rR\big)r\rho
\sin\theta}{(r\rho \cos \theta-\nu)^2}\frac{\chi_0\big(\frac
{r'}R\big)r'\rho \sin\theta'}{(r'\rho \cos
\theta'-\nu)^2}\gamma_2(\frac{r\rho-\nu}{\lambda})\gamma_2(\frac{r'\rho-\nu}{\lambda})\chi_0^2(\rho)$.
First we have the trivial bound by integrating on $\theta,\theta'$
\begin{align*}
|K_3|\les &\int_{0}^{\pi/2}\int_{0}^{\pi/2}\int\frac{r\rho
\sin\theta\gamma_2(\frac{r\rho-\nu}{\lambda})}{(r\rho \cos
\theta-\nu)^2} \cdot\frac{r'\rho
\sin\theta'\gamma_2(\frac{r'\rho-\nu}{\lambda})}{(r'\rho \cos
\theta'-\nu)^2}\chi_0(\rho)d\rho d\theta d\theta'\les\lambda^{-2}.
\end{align*}
Then by Lemma \ref{lem:staph}, let \EQ{
 R_\te:=r\ro\cos\te-\nu, \pq R_\te':=r'\ro\cos\te'-\nu,}
we have
\begin{align*}
|K_3|\les &\int_{0}^{\pi/2}\int_{0}^{\pi/2} |t|^{-1/2}\int
|\partial_\rho
F_3|d\rho d\theta d\theta'\\
\les& |t|^{-1/2}\int_0^{\pi/2}\int_0^{\pi/2}\int |[\p_\ro\p_\te
R_\te^{-1}][\p_\te'(R_\te')^{-1}]|\ga_2(R_0/\la)\ga_2(R_0'/\la)\chi_0^2(\ro)d\ro
d\te d\te'\\
&+|t|^{-1/2}\int_0^{\pi/2}\int_0^{\pi/2}\int |[\p_\te
R_\te^{-1}][\p_\ro\p_\te'(R_\te')^{-1}]|\ga_2(R_0/\la)\ga_2(R_0'/\la)\chi_0^2(\ro)d\ro
d\te d\te'\\
&+ |t|^{-1/2}\int_0^{\pi/2}\int_0^{\pi/2}\int |[\p_\te
R_\te^{-1}][\p_\te'(R_\te')^{-1}]\p_\ro[\ga_2(R_0/\la)\ga_2(R_0'/\la)\chi_0^2(\ro)]|d\rho d\theta d\theta'\\
\les&|t|^{-1/2}\lambda^{-2},
\end{align*}
where we used the fact that $\p_\ro\p_\te R_\te^{-1}, \p_\te
R_\te^{-1}$ do not change the sign. On the other hand, for $|t|\gg
R$, denoting $\phi_1=-2t\rho+r\sin\theta-r'\sin \theta'$, we get
\begin{align*}
|K_3|\les &\int_{0}^{\pi/2}\int_{0}^{\pi/2}\int
|\partial_\rho\big(\phi_1^{-1}\partial_\rho[\phi_1^{-1}F]\big)|d\rho
d\theta d\theta'\les |t|^{-2}\lambda^{-3}R\ln(R).
\end{align*}
Then we get
\[|K_3|\leq \lambda^{-2}(1+|t|)^{-1/2}1_{|t|\les R}+|t|^{-2}\ln(R)\lambda^{-3}R1_{|t|\gg R}\]
which implies that $\norm{K_3}_{L_t^1L_{r,r'}^\infty}\les
\lambda^{-2}R^{1/2}$ as desired.

{\bf Step 3: estimate of $S_{R,3}^\nu$.}

By the support of $\gamma_3$, we have $r\rho>\nu+\lambda>\nu+\nu^{1/3}$
in the support of $\gamma_3(\frac{r\rho-\nu}{\lambda})$. Thus we use
the Lemma \ref{lem:Bessel}, and decompose
\[S_{R,3}^\nu(h):=M_{R,3}^\nu(h)+E_{R,3}^\nu(h)\]
where
\begin{align*}
M_{R,3}^\nu (h)=&\chi_0\big(\frac rR\big)\int
e^{-it\rho^2}\frac{e^{i\theta(r\rho)}+e^{-i\theta(r\rho)}}
{2 \sqrt{2\pi}(r^2\rho^2-\nu^2)^{1/4}}\gamma_3(\frac{r\rho-\nu}{\lambda})\chi_0(\rho)h(\rho)d\rho,\\
E_{R,3}^\nu (h)=&\chi_0\big(\frac rR\big)\int
e^{-it\rho^2}h(\nu,r\rho)\gamma_3(\frac{r\rho-\nu}{\lambda})\chi_0(\rho)h(\rho)d\rho,
\end{align*}
with $\theta(r),h(\nu,r)$ given in Lemma
\ref{lem:Bessel}.

First, we consider $M_{R,3}^\nu$. We only estimate
\[\widetilde{M}_{R,3}^\nu (h)=\chi_0\big(\frac rR\big)\int e^{-it\rho^2}\frac{e^{i\theta(r\rho)}}
{(r^2\rho^2-\nu^2)^{1/4}}\gamma_3(\frac{r\rho-\nu}{\lambda})\chi_0(\rho)h(\rho)d\rho,\]
since the other term is similar. It is easy to see
$\norm{\widetilde{M}_{R,3}^\nu (h)}_{L_t^2L_r^2}\les \norm{h}_2$, as
in the proof of Lemma \ref{prop:roughes}. We will prove
\[\norm{\widetilde{M}_{R,3}^\nu(h)}_{L_t^2L_r^\infty}\les \lambda^{-1/4}\norm{h}_{L^2}.\]
Similarly as in Step 2,
the kernel for $\widetilde{M}_{R,3}^\nu(\widetilde{M}_{R,3}^\nu)^*$
is
\[K(t-t',r,r')=\int e^{-i[(t-t')\rho^2-\theta(r\rho)+\theta(r'\rho)]}\frac{\chi_0\big(\frac rR\big)\gamma_3(\frac{r\rho-\nu}{\lambda})}
{(r^2\rho^2-\nu^2)^{1/4}}\frac{\chi_0\big(\frac
{r'}R\big)\gamma_3(\frac{r'\rho-\nu}{\lambda})}
{(r'^2\rho^2-\nu^2)^{1/4}}\chi_0^2(\rho)d\rho.\] It suffices to show
\[\norm{K}_{L_t^1L_{r,r'}^\infty}\les \lambda^{-1/2}.\]
Obviously, we have a trivial bound
\[|K|\les \lambda^{-1/2}R^{-1/2}.\]
Recall
$\theta(r)=(r^2-\nu^2)^{1/2}-\nu\arccos\frac{\nu}{r}-\frac{\pi}{4}$,
then direct computation shows
\begin{align*}
\theta'(r)=&(r^2-\nu^2)^{1/2}r^{-1},\\
\theta''(r)=&(r^2-\nu^2)^{-1/2}-(r^2-\nu^2)^{1/2}r^{-2}=(r^2-\nu^2)^{-1/2}\nu^2r^{-2},\\
\theta'''(r)=&(r^2-\nu^2)^{-3/2}\frac{\nu^2}{r}(-3+\frac{2\nu^2}{r^2}).
\end{align*}
Denoting $G=\frac{\chi_0\big(\frac
rR\big)\gamma_3(\frac{r\rho-\nu}{\lambda})}
{(r^2\rho^2-\nu^2)^{1/4}}\frac{\chi_0\big(\frac
{r'}R\big)\gamma_3(\frac{r'\rho-\nu}{\lambda})}
{(r'^2\rho^2-\nu^2)^{1/4}}\chi_0^2(\rho)$,
$\phi_2=t\rho^2-\theta(r\rho)+\theta(r'\rho)$. Then
\begin{align*}
\partial_\rho(\phi_2)=&2t\rho-\theta'(r\rho)r+\theta'(r'\rho)r'=2t\rho-
\frac{\rho(r'^2-r^2)}{\sqrt{r'^2\rho^2-\nu^2}+\sqrt{r^2\rho^2-\nu^2}}\\
\partial^2_\rho(\phi_2)=&2t-\theta''(r\rho)r^2+\theta''(r'\rho)r'^2\\
=&2t+
\frac{(r'^2-r^2)}{\sqrt{r'^2\rho^2-\nu^2}+\sqrt{r^2\rho^2-\nu^2}}
\frac{\nu^2}{\sqrt{r'^2\rho^2-\nu^2}\sqrt{r^2\rho^2-\nu^2}}\\
\partial^3_\rho(\phi_2)=&-\theta'''(r\rho)r^3+\theta'''(r'\rho)r'^3
\end{align*}
We observe that if $|\partial_\rho(\phi_2)|\ll |t|$, then
$|\partial^2_\rho(\phi_2)|\ges |t|$. This observation is also true
for the case $a>1$, but not true if $a<1$.

If $|t|\les R$, we divide $K$
\[K=\int e^{-i\phi_2}G \eta_0(\frac{100\partial_\rho(\phi_2)}{t})d\rho+
\int e^{-i\phi_2}G
[1-\eta_0(\frac{100\partial_\rho(\phi_2)}{t})]d\rho:=I_1+I_2.\] By
Lemma \ref{lem:staph}, we obtain
\begin{align*}
|I_1|\les& |t|^{-1/2}\bigg(\int |\partial_\rho G
\eta_0(\frac{100\partial_\rho(\phi_2)}{t})|d\rho+\int |G
\eta_0'(\frac{100\partial_\rho(\phi_2)}{t})\frac{100\partial_\rho^2(\phi_2)}{t}|d\rho\bigg)\\
\les& |t|^{-1/2}\lambda^{-1/2}R^{-1/2},
\end{align*}
where for the first term, we estimate it as $K_3$, while for the
second term, we only need to observe that
$\eta_0'(\frac{100\partial_\rho(\phi_2)}{t})\frac{100\partial_\rho^2(\phi_2)}{t}$
has fixed sign depending only on $t$.

For $I_2$, without loss of generality, we assume $r^2-r'^2>0$. Then
integrating by part, we get
\begin{align*}
|I_2|\les& \int \aabs{\partial_\rho \bigg((\partial_\rho\phi_2)^{-1}G[1-\eta_0(\frac{100\partial_\rho(\phi_2)}{t})]\bigg)}d\rho\\
\les& \int
\aabs{\frac{\partial^2_\rho\phi_2}{(\partial_\rho\phi_2)^2}G[1-\eta_0(\frac{100\partial_\rho(\phi_2)}{t})]}d\rho\\
&+|t|^{-1}(\int |\partial_\rho G|d\rho+\int |G
\eta_0'(\frac{100\partial_\rho(\phi_2)}{t})\frac{100\partial_\rho^2(\phi_2)}{t}|d\rho)\\
\les&\int
\frac{|\partial^2_\rho\phi_2|}{(\partial_\rho\phi_2)^2}G[1-\eta_0(\frac{100\partial_\rho(\phi_2)}{t})]d\rho+
|t|^{-1}\lambda^{-1/2}R^{-1/2}\\
\les&|t|^{-1}\lambda^{-1/2}R^{-1/2},
\end{align*}
where we used the fact that $\partial_\rho^2 \phi_2$ changes the
sign at most once.

If $|t|\gg R$, we have $|\partial_\rho(\phi_2)|\sim |t|$. Thus
integrating by part, we get
\begin{align*}
|K|\les& \int \aabs{\partial_\rho \big[(\partial_\rho\phi_2)^{-1}\partial_\rho \big((\partial_\rho\phi_2)^{-1}G\big)\big]}d\rho\\
\les& \int
\aabs{(\partial_\rho\phi_2)^{-3}\partial^3_\rho\phi_2G}d\rho+\int
\aabs{(\partial_\rho\phi_2)^{-2}\partial_\rho^2G}d\rho\\
&+\int
\aabs{(\partial_\rho\phi_2)^{-3}\partial^2_\rho\phi_2\partial_\rho
G}d\rho+\int
\aabs{(\partial_\rho\phi_2)^{-4}(\partial^2_\rho\phi_2)^2 G}d\rho\\
:=&II_1+II_2+II_3+II_4.
\end{align*}
As for $I_2$, we can obtain
\[II_2+II_3+II_4\les |t|^{-2}\lambda^{-1/2}R^{-1/2}R^2\lambda^{-2}\les |t|^{-2}\lambda^{-5/2}R^{3/2}.\]
For $II_1$, we have
\begin{align*}
II_1\les& |t|^{-3}\lambda^{-1/2}R^{-1/2}\int
(-\theta'''(r\rho)r^3-\theta'''(r'\rho)r'^3)
\gamma_3(\frac{r\rho-\nu}{\lambda})\gamma_3(\frac{r'\rho-\nu}{\lambda})d\rho\\
\les& |t|^{-3}\lambda^{-1/2}R^{-1/2}
\sup_{\rho:r\rho>\nu+\lambda}\theta''(r\rho)r^2\les
|t|^{-3}\lambda^{-1}R.
\end{align*}
Thus, eventually we get
\[|K|\les |t|^{-1/2}\lambda^{-1/2}R^{-1/2}1_{|t|\les R}+(|t|^{-2}\lambda^{-5/2}R^{3/2}+|t|^{-3}\lambda^{-1}R)1_{|t|\gg R}\]
which implies $\norm{K}_{L_t^1L_{r,r'}^\infty}\les \lambda^{-1/2}$
as desired, if $\lambda\ges R^{1/3}$.

It remains to bound $E_{R,3}^\nu$. First, we have for any $f\in L^2$
\[\norm{E_{R,3}^\nu(f)}_{L_t^2L_r^\infty}\les \norm{S_{R,3}^\nu(f)}_{L_t^2L_r^\infty}+\norm{M_{R,3}^\nu(f)}_{L_t^2L_r^\infty}\les \norm{f}_{L^2}.\]
On the other hand, using the decay estimate of $h(\nu,r)$, we get
\begin{align}\label{eq:ER3}
\norm{E_{R,3}^\nu(f)}_{L_t^2L_r^2}\les
(\lambda^{-5/4}R^{1/4}+R^{-1/2})\norm{f}_{L^2}.
\end{align}
Therefore, we complete the proof.
\end{proof}

Now we are ready to prove Theorem \ref{thm1}. At first, we notice that we need to assume $d\ge 3$, due to
the $R^{-1/p}$ bound which comes from the estimates of $E^\nu_{R,3}$ (see \eqref{eq:ER3}).
Next, we optimize the choice of $\lambda$. The main bounds in Lemma
\ref{lem:SR} are $\lambda^{1/4}R^{-1/4}$,
$\lambda^{-\frac{1}{4}(1-\frac{2}{p})}$. We can make them equal by
choosing
\[\lambda=R^{\frac{p}{2(p-1)}}\ges R^{1/3}.\]
Thus \eqref{eq:striestwea} holds, if $q=2$ and $p\geq 2$ satisfies
\[\frac{d-1}{p}-\frac{d-2}{2}+\frac{p}{8(p-1)}-\frac{1}{4}<0
\]
which is equivalent to
\[p>p(d)=\frac{6d-7+\sqrt{4d^2+4d-7}}{4d-7}.\]
Then by interpolation, we can obtain Theorem \ref{thm1}.

In order to apply these new Strichcartz estimates, we use
Christ-Kiselev lemma to derive the inhomogeneous linear estimates.
\begin{cor}\label{cor:str}
Assume $(q,p), (\tilde q,\tilde p)$ both satisfies
\eqref{eq:Schrangular}, $(q,p)\ne (\infty,2)$, and $q>\tilde q'$.
Then
\begin{align*}
\normo{\int_0^tS_a(t-s)P_0f(s)ds}_{L_t^q\Lr_{\rho}^pL_\omega^{2+}}\les
\norm{f}_{L_t^{\tilde q'}\Lr_{\rho}^{\tilde p'}L_\omega^2}
\end{align*}
\end{cor}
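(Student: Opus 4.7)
The plan is to deduce this retarded inhomogeneous estimate from Theorem \ref{thm1} through the standard duality plus Christ--Kiselev scheme. First, Theorem \ref{thm1} applied to the pair $(q,p)$ gives the homogeneous bound
\[
 \norm{S_a(t) P_0 g}_{L_t^q \Lr_\rho^p L_\omega^2} \lesssim \norm{g}_{L^2},
\]
while its dual applied to $(\tilde q, \tilde p)$, using the self-duality of $L^2_\omega$ and of $L^2_x$, yields
\[
 \Bigl\| \int_{\R} S_a(-s) P_0 f(s)\, ds \Bigr\|_{L^2} \lesssim \norm{f}_{L_t^{\tilde q'} \Lr_\rho^{\tilde p'} L_\omega^2}.
\]
Composing these two through the semigroup identity $\int_{\R} S_a(t-s) P_0 f(s)\, ds = S_a(t) \int_{\R} S_a(-s) P_0 f(s)\, ds$ and noting that the inner integral already has frequency support in the range of $P_0$, I obtain the non-retarded inhomogeneous estimate
\[
 \Bigl\| \int_{\R} S_a(t-s) P_0 f(s)\, ds \Bigr\|_{L_t^q \Lr_\rho^p L_\omega^2} \lesssim \norm{f}_{L_t^{\tilde q'} \Lr_\rho^{\tilde p'} L_\omega^2}.
\]

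To pass from the integral over $\R$ in $s$ to the retarded integral $\int_0^t$, I would invoke the Christ--Kiselev lemma, whose crucial hypothesis is exactly the strict inequality $q > \tilde q'$ assumed in the corollary. The vector-valued form needed here applies because the target $\Lr_\rho^p L_\omega^2$ is a reflexive function lattice, so the standard vector-valued extension of Christ--Kiselev (operating on the $t$ variable with values in this Banach space) goes through without difficulty.

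The slight angular loss from $L_\omega^2$ on the right to $L_\omega^{2+}$ on the left serves as a convenient technical cushion rather than a genuine obstacle: because the admissible range \eqref{eq:Schrangular} is open and we have excluded $(q,p) = (\infty,2)$, one may interpolate Theorem \ref{thm1} against a nearby admissible pair, or against the trivial $L_\omega^\infty$ bound that follows from Sobolev embedding on $\cir^{d-1}$, to sacrifice a small amount of angular integrability before applying Christ--Kiselev. The only delicate point worth careful verification is the validity of the Banach-valued Christ--Kiselev lemma in this mixed-norm setting; once that is in place, the corollary follows immediately from the duality-composition chain described above.
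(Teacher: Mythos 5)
Your overall scheme (homogeneous estimate from Theorem \ref{thm1}, its dual, the composition identity, then Christ--Kiselev under $q>\tilde q'$) is exactly the route the paper takes, and the emphasis on the vector-valued Christ--Kiselev lemma in the mixed-norm setting is appropriate. However, the step that actually makes the corollary a statement about $L_\omega^{2+}$ rather than $L_\omega^2$ is not correctly identified, and this is precisely the non-trivial content of the corollary. Interpolating Theorem \ref{thm1} ``against a nearby admissible pair'' cannot produce any angular gain: every pair admissible for Theorem \ref{thm1} still lands you in $L_\omega^2$, and interpolating two instances of the same $L_\omega^2$ estimate keeps you in $L_\omega^2$. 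Your alternative via a Sobolev embedding $\cH^s_q(\cir^{d-1})\hookrightarrow L_\omega^\infty$ also cannot apply here: the input to the free propagator is merely $L^2_x$, with no angular regularity available on either side, so there is no nontrivial $L_\omega^\infty$ estimate to interpolate against.

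The mechanism the paper uses is to interpolate Theorem \ref{thm1} with the \emph{classical} Strichartz estimate \eqref{eq:striest}, valid for AP$(a)$ pairs. The point is that the classical bound is in $L^q_t L^p_x = L^q_t \Lr^p_\rho L^p_\omega$ with $p>2$, so it carries strictly better angular integrability ($L_\omega^p$ rather than $L_\omega^2$) on a narrower range. Since \eqref{eq:Schrangular} is strictly wider than AP$(a)$ and has an open interior (with $(\infty,2)$ excluded), any pair satisfying \eqref{eq:Schrangular} with $(q,p)\ne(\infty,2)$ can be written as a strict interior convex combination of a pair from AP$(a)$ and a pair in \eqref{eq:Schrangular}, giving $\norm{S_a(t)P_0f}_{L_t^q\Lr_{\rho}^pL_\omega^{2+}}\les \norm{f}_{L^2_x}$. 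Once this strengthened homogeneous estimate is in hand, the $TT^*$ composition plus Christ--Kiselev proceed exactly as you describe. So: right skeleton, but you should replace the two interpolation candidates you suggest with the interpolation against \eqref{eq:striest} for AP$(a)$ pairs, which is what actually supplies the angular room.
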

\begin{proof}
By Theorem \ref{thm1} and interpolation with \eqref{eq:striest} for
AP(a), we get slightly stronger estimates: for $(q,p)$ satisfy
\eqref{eq:Schrangular}, and $(q,p)\ne (\infty,2)$, then
\begin{align*}
\normo{S_a(t)P_0f}_{L_t^q\Lr_{\rho}^pL_\omega^{2+}}\les
\norm{f}_{L_x^2}.
\end{align*}
Thus this corollary following immediately from Christ-Kiselev lemma.
\end{proof}

\section{Scattering for Zakharov system}

This section is devoted to proving Theorem \ref{thm2}. The main
ingredients are the normal form reduction and the generalized
Strichartz estimates in Theorem \ref{thm1}.

\subsection{Normal form transform} We recall the normal form transform that was used in \cite{GN}.
It is convenient first to change the system into first order as
usual. Let \EQ{
 N:=n - iD^{-1}\dot n/\al,}
then $n=\re N=(N+\bar N)/2$ and the equations for $(u,N)$ are
\EQ{\label{eq:Zak1}
 \CAS{ (i\p_t-\De) u =Nu/2+\bar N u/2,\\
   (i\p_t+\al D) N = \al D|u|^2.}}
In our proof, the term $\bar N u$ makes no essential difference than
$Nu$, and hence for simplicity, we assume the nonlinear term in
first equation of \eqref{eq:Zak1} is $Nu$.

We use $S(t),W_\alpha(t)$ to denote the Schr\"odinger, wave
propagators:
\[S(t)\phi=\ft^{-1}e^{it|\xi|^2}\widehat{\phi},\quad W_\alpha(t)\phi=\ft^{-1}e^{i\alpha t|\xi|}\widehat{\phi}, \pq \widehat\phi=\F\phi.\]
For a quadratic term $uv$, we use $(uv)_{LH}$, $(uv)_{HH}$,
$(uv)_{HL}$ to denote the three different interactions
\[
 (uv)_{LH}=\sum_{k\in\Z}P_{\leq k-5}uP_kv,(uv)_{HL}=\sum_{k\in\Z}P_kuP_{\leq k-5}v,(uv)_{HH}=\sum_{\substack{|k_1-k_2|\leq 4 \\ k_1,k_2\in\Z}}P_{k_1}uP_{k_2}v.\]
To distinguish the resonant interaction, we also use \EQ{
 (uv)_{\al L}=\sum_{\substack{|k-\log_2\al|\le 1,\\ k\in\Z}}P_kuP_{\leq k-5}v,
 (uv)_{X L}=\sum_{\substack{|k-\log_2\al|> 1,\\ k\in\Z}}P_kuP_{\leq k-5}v,}
and similarly $(uv)_{L\al}$, $(uv)_{LX}$. It is obvious that we have
\EQ{
 uv\pt=(uv)_{HH}+(uv)_{LH}+(uv)_{HL}
 \pr=(uv)_{HH}+(uv)_{L\al}+(uv)_{LX}+(uv)_{\al L}+(uv)_{XL}.}
Moreover, for any such index $*=HH,HL,\al L$, etc., we denote the
bilinear symbol (multiplier) by \EQ{
 \F(uv)_* = \int \cP_*\widehat u(\x-\y)\widehat v(\y)d\y,}
and finite sum of those bilinear operators are denoted by the sum of
indices: \EQ{
 (uv)_{*_1+*_2+\cdots}=(uv)_{*_1}+(uv)_{*_2}+\cdots.}

By normal norm reduction (see \cite{GN}), we obtain that
\eqref{eq:Zak1} is equivalent to the following integral equation
\begin{align}
u=&S(t)u_0+S(t)\Om(N,u)(0)-\Om(N,u)(t)-i\alpha\int_0^tS(t-s)\Om(D|u|^2,u)(s)ds\nonumber\\
&-\frac{i}{2}\int_0^tS(t-s)
\Om(N,Nu)(s)ds-i\int_0^tS(t-s)(Nu)_{LH+HH+\alpha
L}ds,\label{eq:intu}\\
N=&W_\alpha(t)N_0+W_\alpha(t)D\tilde\Om(u,u)(0)-D\tilde\Om(u,u)(t)-\int_0^tW_\alpha(t-s)D(u\bar
u)_{HH+\alpha L+L\al}ds\nonumber\\
&-\int_0^tW_\alpha(t-s)(D\tilde\Om(Nu,u)+D\tilde\Om(u,Nu))(s)ds,\label{eq:intN}
\end{align}
where $\Om, \tilde \Om$ is a bilinear Fourier multiplier in the form
\begin{align*}
\Om(f,g)=&\F^{-1}\int\cP_{XL}\om^{-1}\widehat f(\x-\y)\widehat g(\y)d\y,\\
\tilde\Om(f,g)=&\F^{-1}\int \cP_{XL+LX}\frac{\widehat
f(\x-\y)\widehat{\bar g}(\y)}{|\xi-\eta|^2-|\eta|^2-\alpha|\xi|}d\y.
\end{align*}
The equations after normal form reduction look "roughly" \EQ{
 \pt(i\p_t+D^2)(u-\Om(N,u))=(Nu)_{LH+HH+\al L}+\Om(D|u|^2,u)+\Om(N,Nu),
 \pr(i\p_t+\al D)(N-D\tilde\Om(u,u))=D|u|^2_{HH+\al L+L\al}+D\tilde \Om(Nu,u)+D\tilde\Om(u,Nu).}

\subsection{The angular Strichartz space and main estimates}

Inspired by \cite{GN}, we introduce the Strichartz norm we need, and
present the main nonlinear estimates. For $u$ and $N$, we use the
Strichartz norms with angular regularity:
\begin{align}
u\in X=&\jb{D}^{-1}(L^\I_tH^{0,1}_\omega \cap L_t^2\dot
B^{1/4+\e,1}_{(q(\e),2+),\omega}\cap L_t^2\dot B^{0,1}_{6,\omega}),\label{Strz norms1}\\
N\in Y=&L^\I_tH^{0,1}_\omega \cap L^2_t\dot
B^{-1/4-\e,1}_{(q(-\e),2+),\omega},\label{Strz norms2}
\end{align}
for fixed $0<\e\ll 1$, where $q(\cdot)$ is defined by
\EQ{
 \frac{1}{q(\e)}=\frac{1}{4}+\frac{\e}{3}.}
The condition $0<\e\ll 1$ ensures that \EQ{
 \frac{7}{2}<q(\e)<4<q(-\e)<\I,}
such that the norms in \eqref{Strz norms1}-\eqref{Strz norms2}
satisfy the condition in Theorem \ref{thm1}.

We intend to apply the generalized Strichartz estimates to the
integral equations. To do the nonlinear estimates, we use some
representation theory of $SO(3)$. Let $\mu$ be the Haar measure of
$SO(3)$. In the sequel, we denote $L_A^q=L^q(SO(3),\mu)$.

\begin{lem}\label{lem:SO3}
(a) For any $1\leq p,q\leq \infty$, we have
\[\norm{f}_{\Lr_r^pL_\omega^q}\sim \norm{f(Ax)}_{L_x^pL_A^q}.\]

(b) For any $1<q<\infty$, we have
\[\norm{f}_{\Lr_r^p\Hl_q^1}\sim \norm{f}_{\Lr_r^pL_\omega^q}+\sum_{i,j}\norm{X_{i,j}f}_{\Lr_r^pL_\omega^q},\]
where $X_{i,j}=x_i\partial_j-x_j\partial_i$.
\end{lem}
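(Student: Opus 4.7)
For part (a), my plan is to reduce matters to a Fubini/change-of-variables argument based on the transitivity of $SO(3)$ on $\cir^2$. I will fix $x\in\R^3\setminus\{0\}$ and write $r=|x|$, $\omega_0=x/|x|$; then, by uniqueness of the rotation-invariant Borel measure on $\cir^2$, the pushforward of Haar measure $\mu$ under $A\mapsto A\omega_0$ equals a constant multiple of surface measure on $\cir^2$. This gives
\begin{equation*}
\int_{SO(3)}|f(Ax)|^q\,d\mu(A)=c\int_{\cir^2}|f(r\omega)|^q\,d\omega,
\end{equation*}
a quantity depending only on $r$. After taking $L^p_x$-norms and passing to polar coordinates $dx=r^2\,dr\,d\sigma(\omega')$, the Jacobian combines with the $\omega'$-integration to produce exactly $\|f\|_{\Lr_r^p L_\omega^q}^p$, up to a universal constant and with the standard modifications when $p$ or $q$ is infinite.

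For part (b), I will reduce to a Sobolev equivalence on the sphere itself. The operators $X_{ij}=x_i\partial_j-x_j\partial_i$ are tangent to every sphere $\{|x|=r\}$ and commute with the dilation $x\mapsto rx$; restricted to each such sphere they span the tangent bundle, and the identity $\Delta_\omega=\sum_{i<j}X_{ij}^2$ recorded in the paper shows that collectively they control $\Lambda_\omega=\sqrt{1-\Delta_\omega}$. It therefore suffices to prove the pointwise-in-$r$ equivalence
\begin{equation*}
\|g\|_{\Hl_q^1(\cir^{d-1})}\sim\|g\|_{L_\omega^q}+\sum_{i<j}\|X_{ij}g\|_{L_\omega^q},\qquad 1<q<\infty,
\end{equation*}
and then take $\Lr_r^p$-norms in $r$. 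This inequality is classical: it follows from the $L^q$-boundedness on the sphere of the first-order angular Riesz transforms $X_{ij}\Lambda_\omega^{-1}$ (and the corresponding reverse bound), which can be obtained either via the Mikhlin multiplier theorem applied to the spherical harmonic decomposition (using the spectrum $\{k(k+d-2)\}_{k\ge 0}$ of $-\Delta_\omega$) or by appealing to Calder\'on--Zygmund theory on the compact Riemannian manifold $\cir^{d-1}$.

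The main obstacle is the sphere-level Sobolev equivalence in part (b): part (a) is essentially bookkeeping with invariant measures, and the reduction of (b) to a statement purely on $\cir^{d-1}$ is similarly direct, but the $L^q$-boundedness of the angular Riesz transforms requires genuine singular integral theory on the sphere. Once this ingredient is in hand, assembling the two pieces by integrating in $r$ (and, for (a), by combining the rotational identity with polar coordinates) completes the lemma.
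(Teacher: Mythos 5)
Your proposal is correct and is in the same spirit as the paper's: the paper's own ``proof'' is a single line declaring part (a) direct and citing Taylor's book for part (b), and what you have written is exactly the content that citation points to, namely the pushforward-of-Haar-measure computation for (a) and the sphere-level Sobolev equivalence for (b) followed by taking $\Lr_r^p$ norms.

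One caveat on the alternative justification you offer for (b). The operators $X_{ij}\Lambda_\omega^{-1}$ are \emph{not} spectral multipliers of $\Delta_\omega$: they single out a coordinate pair, do not commute with all rotations, and are not functions of $\Delta_\omega$ alone, so a Mikhlin-type multiplier theorem applied to the spherical-harmonic spectrum $\{k(k+d-2)\}$ does not by itself give their $L^q$ boundedness. The route you also propose is the correct one: $X_{ij}\Lambda_\omega^{-1}$ and $\Lambda_\omega^{-1}X_{ij}$ are order-zero pseudodifferential operators on the compact manifold $\cir^{d-1}$, hence bounded on $L_\omega^q$ for $1<q<\infty$ by Calder\'on--Zygmund theory (this is the material in Taylor). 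Combined with $\Lambda_\omega^2 = 1-\sum_{i<j}X_{ij}^2$, this yields both directions of the fixed-radius equivalence, and integrating the $q$-th power in $r$ with weight $r^{d-1}$ completes part (b).

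Beyond that caveat the argument is sound, and in fact supplies details the paper omits.
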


\begin{proof}
Part (a) is direct. For the proof of part (b), see \cite{Taylor}.
\end{proof}

\begin{lem}\label{lem:conv}
Assume $1\leq p_1,p_2,q_1,q_2,p,q\leq \infty$,
$\frac{1}{q}=\frac{1}{q_1}+\frac{1}{q_2}$,
$1+\frac{1}{p}=\frac{1}{p_1}+\frac{1}{p_2}$. Then
\[\norm{f*g}_{\Lr_r^pL_\omega^q}\leq C \norm{f}_{\Lr_r^{p_1}L_\omega^{q_1}}\norm{g}_{\Lr_r^{p_2}L_\omega^{q_2}}.\]
\end{lem}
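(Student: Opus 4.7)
The plan is to reduce the mixed-norm convolution estimate to the classical Young's inequality on $\R^d$ together with H\"older's inequality in the angular variable, by linearizing the mixed norm through the $SO(d)$ action provided by Lemma~\ref{lem:SO3}(a).

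First I will use Lemma~\ref{lem:SO3}(a) to rewrite every norm as an $L^p_x L^q_A$-norm on $\R^d\times SO(d)$, so that $\|f*g\|_{\Lr^p_r L^q_\omega}\sim\|(f*g)(Ax)\|_{L^p_x L^q_A}$, and similarly for the two factors on the right-hand side. Since each $A\in SO(d)$ preserves Lebesgue measure on $\R^d$, the substitution $y\mapsto Ay$ in the convolution integral yields $(f*g)(Ax)=\int f(Az) g(A(x-z))\,dz = (f_A*g_A)(x)$, where $f_A(x):=f(Ax)$. In other words, convolving and then rotating is the same as rotating first and then convolving, so the entire bilinear problem is transported to the rotated functions $f_A,g_A$.

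Next I will estimate pointwise in $x$: Minkowski's integral inequality moves the $L^q_A$-norm under the convolution integral, and H\"older's inequality in $A$, justified by $\frac{1}{q}=\frac{1}{q_1}+\frac{1}{q_2}$, gives $\|(f_A*g_A)(x)\|_{L^q_A}\leq \int F(z)\, G(x-z)\,dz$, where $F(z):=\|f(Az)\|_{L^{q_1}_A}$ and $G(w):=\|g(Aw)\|_{L^{q_2}_A}$ are nonnegative functions on $\R^d$. Finally, taking the $L^p_x$-norm and applying the classical Young's inequality with $1+\frac{1}{p}=\frac{1}{p_1}+\frac{1}{p_2}$ produces $\|F*G\|_{L^p_x}\lesssim\|F\|_{L^{p_1}_x}\|G\|_{L^{p_2}_x}$, and a final application of Lemma~\ref{lem:SO3}(a) (using Fubini to identify $\|F\|_{L^{p_1}_x}=\|f(Ax)\|_{L^{p_1}_x L^{q_1}_A}\sim\|f\|_{\Lr^{p_1}_r L^{q_1}_\omega}$, and similarly for $G$) closes the estimate.

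I do not expect any genuine obstacle: the argument is a clean Minkowski--H\"older--Young chain made possible by the rewriting in step one. The most substantive point, if any, is the commutation identity $(f*g)(Ax)=(f_A*g_A)(x)$, which exploits the rotation invariance of Lebesgue measure on $\R^d$ and is what allows the mixed norm $\|\cdot\|_{\Lr^p_r L^q_\omega}$ to be handled as an ordinary $L^p(\R^d;L^q(SO(d)))$ Bochner norm throughout.
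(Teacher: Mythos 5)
Your argument is correct and follows the paper's proof essentially verbatim: the paper likewise uses Lemma~\ref{lem:SO3}(a) to pass to $L^p_x L^q_A$, then changes variables $y\mapsto Ay$ inside the convolution, and concludes with Minkowski's and H\"older's inequalities in $A$ followed by Young's inequality in $x$. Your write-up merely spells out the intermediate identity $(f*g)(Ax)=(f_A*g_A)(x)$ more explicitly.
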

\begin{proof}
By Lemma \ref{lem:SO3}, we have
\begin{align*}
\norm{f*g}_{\Lr_r^pL_\omega^q}\sim&\normo{\int
f(Ax-y)g(y)dy}_{L_x^pL_A^q}\\
\sim&\normo{\int f(Ax-Ay)g(Ay)dy}_{L_x^pL_A^q}\les
\norm{f}_{\Lr_r^{p_1}L_\omega^{q_1}}\norm{g}_{\Lr_r^{p_2}L_\omega^{q_2}},
\end{align*}
where in the last inequality we used Minkowski's and H\"older's
inequalities in $A$, then Young's inequality in $x$.
\end{proof}

For a symbol $m$ on $\R^{6}$, define $T_m$ to be the bilinear
operator on $\R^3$:
\[T_m(f,g)(x)=\int_{\R^{6}} m(\xi,\eta)\widehat{f}(\xi)\widehat{g}(\eta)e^{ix(\xi+\eta)}d\xi d\eta.\]
The following lemma plays a crucial role.

\begin{lem}\label{lem:bilinear}
Let $1\leq p,p_1,p_2\leq \infty$ and $1/p=1/p_1+1/p_2$. Assume
$m(\xi,\eta)=h(|\xi|,|\eta|)$ for some function $h$, $m$ is bounded
and satisfies for all $\alpha,\beta$
\[|\partial_\xi^\alpha \partial_\eta^\beta m(\xi,\eta)|\leq C_{\alpha\beta}|\xi|^{-|\alpha|}|\eta|^{-|\beta|},\quad \xi,\eta \ne 0.\]
Then for $q>2$,
\[\norm{T_m(P_{k_1}f,P_{k_2}g)}_{\Lr_r^{p}\Hl^1_{q}}\leq C
\norm{f}_{\Lr_r^{p_1}\Hl^1_{q}}\norm{g}_{{\Lr_r^{p_2}\Hl^1_{q}}}\]
holds for any $k_1,k_2\in \Z$, with an uniform constant $C$.
\end{lem}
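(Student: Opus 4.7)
My plan combines three ingredients: the rotation invariance of $T_m$ arising from the fact that $m$ depends only on $(|\xi|,|\eta|)$; a uniform $L^1$-kernel bound obtained from the Mikhlin-type derivative conditions on $m$; and the angular Sobolev embedding $\Hl^1_q(\cir^2)\hookrightarrow L^\infty(\cir^2)$, which is valid precisely for $q>2$ and plays the role of an algebra property on spheres.

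First, since $m(\xi,\eta)=h(|\xi|,|\eta|)$, a direct change of variables on the Fourier side shows
\[T_m(f\circ A,g\circ A)(x)=T_m(f,g)(Ax),\qquad A\in SO(3),\]
and because $P_{k_i}$ is a radial multiplier it commutes with rotations. Combined with Lemma~\ref{lem:SO3}(a) this yields
\[\norm{T_m(P_{k_1}f,P_{k_2}g)}_{\Lr_r^p L_\omega^q}\sim \normo{T_m(P_{k_1}(f\circ A),P_{k_2}(g\circ A))(x)}_{L_x^p L_A^q}.\]
Setting $M(\xi,\eta):=m(\xi,\eta)\chi_{k_1}(|\xi|)\chi_{k_2}(|\eta|)$ and $K:=\F^{-1}M$, the operator admits the convolution representation
\[T_m(P_{k_1}F,P_{k_2}G)(x)=c\int K(y_1,y_2)\,F(x-y_1)\,G(x-y_2)\,dy_1\,dy_2.\]
The dyadic rescaling $\xi=2^{k_1}\xi'$, $\eta=2^{k_2}\eta'$ turns the hypothesis on $m$ into the statement that $M(2^{k_1}\xi',2^{k_2}\eta')$ lies in a bounded subset of $C_c^\infty(\R^6)$ uniformly in $k_1,k_2$. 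Integration by parts then yields
\[|K(y_1,y_2)|\les 2^{3k_1+3k_2}(1+2^{k_1}|y_1|)^{-N}(1+2^{k_2}|y_2|)^{-N}\]
for every $N$, and in particular $\norm{K}_{L^1(\R^3\times\R^3)}\leq C$ uniformly in $k_1,k_2$. This uniform kernel bound is the key technical step.

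For the $\Lr_r^p L_\omega^q$ estimate I apply Minkowski in $(y_1,y_2)$ and then H\"older in $A\in SO(3)$ with $L^q_A\cdot L^\infty_A\subset L^q_A$. Since $P_{k_i}(f\circ A)=(P_{k_i}f)\circ A$ and $\{Az:A\in SO(3)\}=|z|\cir^2$ for $z\neq 0$, one has
\[\normb{(P_{k_1}(f\circ A))(x-y_1)\,(P_{k_2}(g\circ A))(x-y_2)}_{L^q_A}\les h_1(|x-y_1|)\,h_2(|x-y_2|),\]
with $h_1(r)=\norm{(P_{k_1}f)(r\omega)}_{L^q_\omega}$ and $h_2(r)=\norm{(P_{k_2}g)(r\omega)}_{L^\infty_\omega}$. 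The angular Sobolev embedding (the unique place where $q>2$ is needed) lets me replace $h_2(r)$ by $\norm{(P_{k_2}g)(r\omega)}_{\Hl^1_q}$. A further Minkowski+H\"older in the $x$-variable with $1/p=1/p_1+1/p_2$, together with $\norm{K}_{L^1}\leq C$ and the uniform bound $\norm{P_{k_i}F}_{\Lr_r^{p_i}\Hl^1_q}\les\norm{F}_{\Lr_r^{p_i}\Hl^1_q}$ (a consequence of Lemma~\ref{lem:conv} applied to the radial Schwartz kernel of $P_{k_i}$, plus $X_{ij}P_{k_i}=P_{k_i}X_{ij}$), yields
\[\norm{T_m(P_{k_1}f,P_{k_2}g)}_{\Lr_r^p L^q_\omega}\les \norm{f}_{\Lr_r^{p_1}L^q_\omega}\,\norm{g}_{\Lr_r^{p_2}\Hl^1_q},\]
and the symmetric bound with $f,g$ exchanged.

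Finally, to upgrade from $L^q_\omega$ to $\Hl^1_q$, I differentiate the rotation-invariance identity $T_m(f\circ R^{ij}_t,g\circ R^{ij}_t)=T_m(f,g)\circ R^{ij}_t$ at $t=0$, where $R^{ij}_t$ denotes rotation in the $(i,j)$-plane, producing the Leibniz rule
\[X_{ij}T_m(P_{k_1}f,P_{k_2}g)=T_m(X_{ij}P_{k_1}f,P_{k_2}g)+T_m(P_{k_1}f,X_{ij}P_{k_2}g).\]
Combined with both $L^q_\omega$-estimates above and the characterization of $\Hl^1_q$ via the $X_{ij}$ from Lemma~\ref{lem:SO3}(b), this closes the argument. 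The main obstacle is the uniform-in-$(k_1,k_2)$ kernel bound, but after the dyadic rescaling it reduces to an $L^1$-bound on $\F^{-1}$ of a uniformly bounded family in $C_c^\infty$, which is standard.
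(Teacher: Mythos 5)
Your proposal is correct and follows essentially the same route as the paper: a uniform $L^1$ kernel bound from the dyadic-rescaled Mikhlin conditions, the rotation/radiality structure to pass to $L^p_xL^q_A$, H\"older with the embedding $\Hl^1_q(\cir^2)\hookrightarrow L^\infty_\omega$ for $q>2$, and the Leibniz rule for $X_{ij}$ to upgrade $L^q_\omega$ to $\Hl^1_q$. The only (harmless) difference is that your derivation of the $X_{ij}$-Leibniz identity via differentiating the rotation-invariance equation is a bit more explicit than the paper's, which simply invokes it.
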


\begin{proof}
We can write
\[T_m(P_{k_1}f,P_{k_2}g)(x)=\int K(x-y,x-y')f(y)g(y')dydy'\]
where the kernel is given by
\[K(x,y)=\int m(\xi,\eta)\chi_{k_1}(\xi)\chi_{k_2}(\eta)e^{ix\xi+iy\eta}d\xi d\eta.\]
From the assumption on $m$, and integration by parts, we get a
pointwise bound of the kernel:
\begin{align}\label{eq:pointwiseK}
|K(x,y)|\les 2^{3k_1}(1+|2^{k_1}x|)^{-4}2^{3k_2}(1+|2^{k_2}y|)^{-4}.
\end{align}
Since $m$ is both radial in $\xi,\eta$, then $K$ is both radial in
$x,y$. Then by Lemma \ref{lem:SO3} (b) we get
\begin{align*}
\norm{T_m(P_{k_1}f,P_{k_2}g)}_{\Lr_r^{p}\Hl^1_{q}}\les&
\norm{T_m(P_{k_1}f,P_{k_2}g)}_{\Lr_r^{p}L^q_{\omega}}+\sum_{i,j}\norm{X_{i,j}T_m(P_{k_1}f,P_{k_2}g)}_{\Lr_r^{p}L^q_{\omega}}\\
:=&I+II.
\end{align*}
For the term $I$, by Minkowski's inequality and
\eqref{eq:pointwiseK}, we have
\begin{align*}
I\les&\normo{\int K(Ax-y,Ax-y')f(y)g(y')dydy'}_{L_x^pL_A^q}\\
\les&\normo{\int K(x-y,x-y')f(Ay)g(Ay')dydy'}_{L_x^pL_A^q}\\
\les&\norm{f(Ax)}_{L_x^{p_1}L_A^\infty}\cdot
\norm{g(Ax)}_{L_x^{p_2}L_A^q}\les\norm{f}_{\Lr_r^{p_1}L_\omega^{\infty}}\norm{g}_{\Lr_r^{p_2}L_\omega^{q}},
\end{align*}
then by the Sobolev embedding $\Hl_q^1(\cir^{2})\hookrightarrow
L_\omega^\infty$, this suffices to give the desired bound.
Similarly, for the term $II$, we have
\begin{align*}
II\les&\sum_{i,j}\norm{T_m(P_{k_1}X_{i,j}f,P_{k_2}g)}_{\Lr_r^{p}L^q_{\omega}}+\sum_{i,j}\norm{T_m(P_{k_1}f,P_{k_2}X_{i,j}g)}_{\Lr_r^{p}L^q_{\omega}}\\
\les&\sum_{i,j}
\norm{X_{i,j}f}_{\Lr_r^{p_1}L_\omega^{q}}\norm{g}_{\Lr_r^{p_2}L_\omega^{\infty}}+\sum_{i,j}
\norm{X_{i,j}g}_{\Lr_r^{p_1}L_\omega^{q}}\norm{f}_{\Lr_r^{p_2}L_\omega^{\infty}},
\end{align*}
which gives the desired bound, by Sobolev embedding.
\end{proof}

With these lemmas above, we can follow the proof with slight
modifications in \cite{GN} to prove the main estimates. The
following two lemmas can be proved similarly as Lemma 3.2-3.3 in
\cite{GN}. The main difference is that we use Lemma
\ref{lem:bilinear} for every bilinear dyadic piece.
\begin{lem}[Bilinear terms I]\label{lem:bi1}
(1) For any $N$ and $u$, we have
\begin{align*}
 \|(Nu)_{LH}\|_{L^1_tH^{1,1}_\omega} \les& \|N\|_{L^2_t\dot B^{-1/4-\e,1}_{(q(-\e),2+),\omega}}\|\jb{D}u\|_{L^2_t \dot
 B^{1/4+\e,1}_{(q(\e),2+),\omega}},\\
 \|(Nu)_{HH}\|_{L^1_tH^{1,1}_\omega} \les& \|N\|_{L^2_t\dot B^{-1/4-\e,1}_{(q(-\e),2+),\omega}}\|\jb{D}u\|_{L^2_t \dot
 B^{1/4+\e,1}_{(q(\e),2+),\omega}}.
\end{align*}

(2) If $0\leq \theta\leq 1$, $\frac{1}{\tilde
q}=\frac{1}{2}-\frac{\theta}{2}$, $\frac{1}{\tilde
r}=\frac{1}{4}+\frac{\theta}{3}+\frac{\e}{3}$, then for any $N$ and
$u$
\begin{align*}
\|(Nu)_{\alpha L}\|_{\LR{D}^{-1}L^{\tilde q'}_t \dot
B^{\frac{3}{2}-\frac{2}{\tilde q}-\frac{3}{\tilde r},1}_{(\tilde
r',2),\omega}} \lec \|N\|_{L^2_t\dot
B^{-1/4-\e,1}_{(q(-\e),2+),\omega}}\|u\|_{L_t^\infty
H_\omega^{0,1}\cap L^2_t \dot
 B^{1/4+\e,1}_{(q(\e),2+),\omega}}.
\end{align*}
\end{lem}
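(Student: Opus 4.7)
The plan is to follow the scheme of Lemma 3.2 in \cite{GN}, substituting the angular bilinear estimate (Lemma~\ref{lem:bilinear}) for scalar product estimates, so that the $\Hl^1_\omega$ regularity is handled uniformly across all dyadic frequencies.

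\emph{Part (1).} Dyadically decompose $N=\sum_{k_1} P_{k_1} N$ and $u=\sum_{k_2} P_{k_2} u$. In the $LH$ case the type restriction forces $k_1 \le k_2 - 5$, and the output frequency is $\sim 2^{k_2}$. Applying Lemma~\ref{lem:bilinear} at angular regularity $\Hl^1_{2+}$ (where the Sobolev embedding $\Hl^1_{2+}(\cir^2)\hookrightarrow L^\I_\omega$ absorbs one factor in $L^\I_\omega$), one obtains
\begin{align*}
\|P_{k_1} N\cdot P_{k_2} u\|_{\Lr^2_r \Hl^1_{2+}} \lesssim \|P_{k_1}N\|_{\Lr^{q(-\e)}_r \Hl^1_{2+}} \|P_{k_2}u\|_{\Lr^{q(\e)}_r \Hl^1_{2+}}
\end{align*}
with a constant uniform in $k_1,k_2$, using the Hölder relation $1/q(\e)+1/q(-\e)=1/2$. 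Put the $2^{k_2}\sim\jb{D}$ derivative from $H^{1,1}_\omega$ on the $u$ factor, combine the two $L^2_t$ norms into $L^1_t$ by Cauchy-Schwarz, and use Bernstein on $P_{\le k_2-5}N$ to reexpress the homogeneity gap as the negative Besov regularity $-1/4-\e$ on the $N$ side. Summing the resulting bound over $k_2$ by $\ell^1\times\ell^1\to\ell^1$ closes the estimate. The $HH$ case is analogous: both dyadic factors sit at comparable frequencies $\sim 2^k$, an outer $P_{\le k}$ on the output together with Bernstein handles cancellations to lower frequencies, and the diagonal sum in $k$ converges for the same reason.

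\emph{Part (2).} The $\al L$ interaction pins $k_1\sim\log_2\al$, removing the sum over $k_1$. One applies the same bilinear inequality to $P_{k_1}N\cdot P_{\le k_1 - 5} u$, but now lands in an angular Besov norm $L^{\tilde q'}_t \dot B^{\cdots}_{(\tilde r',2),\omega}$ via the inhomogeneous Strichartz machinery of Corollary~\ref{cor:str}. The one-parameter family in $\theta\in[0,1]$ arises by interpolation between two endpoint choices (roughly $\theta=0$ keeping the $L^2_t$ time integrability and $\theta=1$ trading it for $L^\I_t$); the identities $1/\tilde q=1/2-\theta/2$ and $1/\tilde r=1/4+\theta/3+\e/3$ are precisely those dictated by Hölder in time and by the spatial bilinear estimate together with the regularity balance on the resonant shell $|k_1-\log_2\al|\le 1$.

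\emph{Main obstacle.} The subtle point is ensuring that the bilinear estimate on each dyadic piece carries a constant independent of the frequency indices; this uniformity is guaranteed by the pointwise kernel bound derived in the proof of Lemma~\ref{lem:bilinear}, combined with the $SO(3)$-invariance rewriting of Lemma~\ref{lem:SO3}. Once this uniformity is in hand, the remainder of the proof reduces to bookkeeping of Besov exponents and Bernstein-type inequalities exactly as in \cite{GN}.
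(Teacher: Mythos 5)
Your strategy coincides with the paper's (admittedly terse) approach: the paper only states that Lemma 3.2--3.3 of \cite{GN} should be repeated with every bilinear dyadic piece controlled via Lemma~\ref{lem:bilinear}, and your dyadic decomposition / H\"older-in-$(t,r)$ / Besov-summation outline is exactly that scheme, with the $SO(3)$-invariance of Lemma~\ref{lem:SO3} supplying the uniform constant. Two small slips are worth flagging: in Part~(2) the endpoint labels are swapped --- $\theta=0$ gives $\tilde q'=2$ so H\"older in $t$ forces the $L^\I_t H^{0,1}_\omega$ piece of $u$ (no Bernstein shift in $\Lr^p_r$ needed), while $\theta=1$ gives $\tilde q'=1$ and uses the $L^2_t\dot B^{1/4+\e,1}_{(q(\e),2+),\omega}$ piece together with a radial Bernstein gain from the low-frequency localization $P_{\le k-5}u$; and Corollary~\ref{cor:str} is not invoked inside this lemma (it is applied afterwards when the bilinear output is inserted into Duhamel), the lemma itself being a pure product estimate from H\"older, Lemma~\ref{lem:bilinear}, and frequency bookkeeping.
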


\begin{lem}[Bilinear terms II]\label{lem:bi2} (1) For any $u$, we have
\EQ{
 \|D(u\bar u)_{HH}\|_{L^1_tH^{0,1}_\omega} \lec \|u\|_{L^2_t\dot B^{1/4-\e,1}_{(q(-\e),2+),\omega}}\|\jb{D}u\|_{L^2_t \dot B^{1/4+\e,1}_{(q(\e),2+),\omega}}.}

(2) If $0\leq \theta\leq 1$, $\frac{1}{\tilde
q}=\frac{1}{2}-\frac{\theta}{2}$, $\frac{1}{\tilde
r}=\frac{1}{4}+\frac{\theta}{3}-\frac{\e}{3}$, then \EQ{
 \|D(u\bar u)_{\alpha L+L\al}\|_{L^{\tilde q'}_t \dot
B^{\frac{3}{2}-\frac{1}{\tilde q}-\frac{3}{\tilde r},1}_{(\tilde
r',2),\omega}} \lec \|\jb{D}u\|_{L_t^\infty H_\omega^{0,1}\cap L^2_t
\dot B^{1/4+\e,1}_{(q(\e),2+)},\omega}^2.}
\end{lem}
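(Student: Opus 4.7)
Both bounds follow the template of Lemma~3.3 in \cite{GN}, with the refinement that angular integrability is now controlled by Lemma~\ref{lem:bilinear}. The four ingredients are: (i) dyadic Littlewood--Paley decomposition on each factor; (ii) Lemma~\ref{lem:bilinear} (with $q=2+$) to bound each bilinear piece in $\Lr_r^p\Hl^1_{2+}$; (iii) H\"older in time; (iv) Cauchy--Schwarz summation over the dyadic index.

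For part (1), decompose $(u\bar u)_{HH}=\sum_{|k_1-k_2|\le 4}P_{k_1}u\cdot P_{k_2}\bar u$. Each summand has spatial Fourier support in $\{|\xi|\le 2^{k_1+3}\}$; since $D$ commutes with the rotation-invariant $\Lambda_\omega$, Plancherel together with the compact embedding $\Hl^1_{2+}(\cir^2)\hookrightarrow\Hl^1_2(\cir^2)$ yields
\begin{align*}
\|D(P_{k_1}u\,P_{k_2}\bar u)\|_{H^{0,1}_\omega} \lec 2^{k_1}\|P_{k_1}u\,P_{k_2}\bar u\|_{\Lr_r^2\Hl^1_{2+}}.
\end{align*}
Applying Lemma~\ref{lem:bilinear} with $(p,p_1,p_2)=(2,q(-\e),q(\e))$ (admissible because $1/q(-\e)+1/q(\e)=1/2$) and H\"older in time ($L^1_t\subset L^2_t\cdot L^2_t$), the right-hand side is bounded by $2^{k_1}\|P_{k_1}u\|_{L^2_t\Lr_r^{q(-\e)}\Hl^1_{2+}}\|P_{k_2}u\|_{L^2_t\Lr_r^{q(\e)}\Hl^1_{2+}}$. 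Setting $\alpha_k=2^{k(1/4-\e)}\|P_ku\|_{L^2_t\Lr_r^{q(-\e)}\Hl^1_{2+}}$ and $\beta_k=\jb{2^k}\,2^{k(1/4+\e)}\|P_ku\|_{L^2_t\Lr_r^{q(\e)}\Hl^1_{2+}}$, the dyadic sum becomes
\begin{align*}
\sum_{k_1}2^{k_1/2}\jb{2^{k_1}}^{-1}\alpha_{k_1}\beta_{k_1} \lec \sum_{k_1}2^{-|k_1|/2}\alpha_{k_1}\beta_{k_1} \lec \|\alpha\|_{\ell^2}\|\beta\|_{\ell^2},
\end{align*}
by Cauchy--Schwarz, and the two $\ell^2$ norms coincide with $\|u\|_{L^2_t\dot B^{1/4-\e,1}_{(q(-\e),2+),\omega}}$ and $\|\jb{D}u\|_{L^2_t\dot B^{1/4+\e,1}_{(q(\e),2+),\omega}}$ respectively.

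For part (2), the $\al L+L\al$ interaction has one factor $P_{k_1}u$ concentrated at the fixed scale $|k_1-\log_2\al|\le 1$ and the other $P_{\le k_1-5}\bar u$ at all lower frequencies. Apply Lemma~\ref{lem:bilinear} with $p_1=q(\e)$ and $p_2$ determined by $1/p_2=1/\tilde r'-1/q(\e)$, then use $\Hl^1_{2+}\hookrightarrow L^2_\omega$ on the output to match the target angular integrability $2$. The factor $D\sim\al$ contributes only a bounded constant. H\"older in time splits $L^{\tilde q'}_t=L^2_t\cdot L^{2/\theta}_t$ (with $L^\infty_t$ at $\theta=0$), placing one copy of $\jb D u$ in $L^2_t\dot B^{1/4+\e,1}_{(q(\e),2+),\omega}$ and the other in the interpolate of $L^\infty_tH^{0,1}_\omega$ and $L^2_t\dot B^{1/4+\e,1}_{(q(\e),2+),\omega}$ dictated by $\theta$. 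The summation over the low frequency $k_2\le k_1-5$ converges thanks to the positive Besov weight $1/4+\e>0$, yielding the stated estimate.

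The main technical obstacle is the angular integrability mismatch: Lemma~\ref{lem:bilinear} requires $q>2$, whereas the target Besov norms have angular integrability exactly $2$. This is resolved by carrying $2+$ throughout and invoking the compact-sphere embedding $\Hl^1_{2+}\hookrightarrow\Hl^1_2$ at the output. A secondary point is the dyadic bookkeeping in part (1): the $2^{k_1}$ weight from $D$ would be fatal without the extra derivative in $\jb D u$, which via the residual factor $2^{k_1/2}\jb{2^{k_1}}^{-1}\lec 2^{-|k_1|/2}$ makes Cauchy--Schwarz close the estimate.
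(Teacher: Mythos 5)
Your proposal takes the same approach that the paper does: the paper's proof of Lemma~\ref{lem:bi2} consists of the single remark that it follows ``similarly as Lemma~3.2--3.3 in \cite{GN}, the main difference being that Lemma~\ref{lem:bilinear} is applied to every bilinear dyadic piece.'' Your four ingredients (dyadic decomposition, Lemma~\ref{lem:bilinear} with the angular index carried as $2+$ and closed via the compact-sphere embedding $\Hl^1_{2+}\hookrightarrow\Hl^1_2$, H\"older in time, Cauchy--Schwarz in the dyadic index) are exactly this. Your treatment of part~(1) is complete and the exponent arithmetic checks out.

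Part~(2) is structurally right but two details are shaky. First, the asserted reason for summability, ``the summation over the low frequency $k_2\le k_1-5$ converges thanks to the positive Besov weight $1/4+\e>0$,'' is backwards: a positive Besov index means $\|P_{k_2}u\|\lesssim 2^{-k_2(1/4+\e)}\|u\|_{\dot B^{1/4+\e}}$, which \emph{grows} as $k_2\to-\infty$. The actual source of decay for $k_2\ll 0$ is a Bernstein gain in the radial exponent (the low-frequency factor is placed in an $\Lr^{p_2}_r$ with $p_2$ strictly larger than the natural interpolation exponent), while for $k_2\gtrsim 0$ the factor $\jb{2^{k_2}}^{-1}$ coming from $\jb{D}u$ provides the decay; with your stated choice $1/p_2=1/\tilde r'-1/q(\e)$ the net dyadic factor works out to $\jb{2^{k_2}}^{-1}$, which still does not sum over $k_2\to-\infty$, so the $p_2$ needs to be tuned down a bit (or the low-frequency block treated without splitting it dyadically). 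Second, Lemma~\ref{lem:bilinear} requires angular integrability $q>2$ in \emph{both} inputs (the proof uses $\Hl^1_q(\cir^2)\hookrightarrow L^\infty_\omega$ on each factor), but the $L^\infty_t H^{0,1}_\omega$ factor sits only in $\Hl^1_2$; this is why the paper's Remark following the lemma stipulates a fixed $0<\theta_0\ll 1$, so that interpolation yields a genuine $q>2$ on that factor, and your proof should make the same restriction explicit rather than running $0\le\theta\le 1$ uniformly.
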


\begin{rem}
In application, we will use Lemma \ref{lem:bi1} (2) and Lemma
\ref{lem:bi2} (2) by fixing a $0<\theta_0\ll 1$ such that by this
choice $(\tilde q,\tilde r)$ is admissible to apply Corollary
\ref{cor:str}.
\end{rem}

Note the fact that $X_{i,j}(fg)=X_{i,j}fg+fX_{i,j}g$, and $X_{i,j}$
commute with the radial Fourier multiplier operator. Then the
following two lemmas are just the bilinear and trilinear estimates,
Lemma 3.5 and Lemma 3.7 obtained in \cite{GN} after applying
$X_{i,j}$, and not by Lemma \ref{lem:bilinear}.

\begin{lem}[Boundary terms]
(a) For any $N_0$ and $u_0$, we have
\begin{align*}
\|\Om(N_0,u_0)\|_{H^{1,1}_\omega} \les&
\|N_0\|_{H^{0,1}_\omega}\|u_0\|_{H^{1,1}_\omega},\\
\|D\tilde\Om(u_0,u_0)\|_{H^{0,1}_\omega}\les&
\|u_0\|_{H^{1,1}_\omega}\|u_0\|_{H^{1,1}_\omega}.
\end{align*}
As a consequence, for any $N$ and $u$
\begin{align*}
\|\Om(N,u)\|_{L_t^\I H^{1,1}_\omega} \les& \|N\|_{L_t^\I
H^{0,1}_\omega}\|u\|_{L_t^\I H^{1,1}_\omega},\\
\|D\tilde\Om(u,u)\|_{L_t^\I H^{0,1}_\omega}\les& \|u\|_{L_t^\I
H^{1,1}_\omega}\|u\|_{L_t^\I H^{1,1}_\omega}.
\end{align*}

(b) For any $N$ and $u$ we have
\begin{align*}
\|\LR{D}\Om(N,u)\|_{L^2_t\dot
B^{1/4+\e,1}_{q(\e),\omega}}\les&\|N\|_{L^\I_t
H^{0,1}_\omega}\|u\|_{L^2_t B^{1,1}_{6,\omega}},\\
\|D\tilde\Om(u,u)\|_{L^2_t \dot B^{-1/4-\e,1}_{q(-\e),\omega}}\les&
\|u\|_{L^2_tB^{1,1}_{6,\omega}}\|u\|_{L^\I_tH^{1,1}_\omega}.
\end{align*}
\end{lem}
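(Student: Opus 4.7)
The plan is to reduce the statement to the analogous bilinear estimates already obtained in the radial setting by Guo--Nakanishi \cite{GN}, by exploiting the fact that the angular derivations $X_{ij}=x_i\p_j-x_j\p_i$ commute with every radial Fourier multiplier. The symbols of $\Om$ and $\tilde\Om$ depend only on $|\x-\y|$, $|\y|$, and $|\x|$ (through the cutoffs $\cP_{XL}$, $\cP_{XL+LX}$ and through the resonance factor $\omega^{-1}$), so $[X_{ij},\Om]=[X_{ij},\tilde\Om]=0$. Combined with the Leibniz rule $X_{ij}(fg)=X_{ij}f\cdot g+f\cdot X_{ij}g$, this yields
\begin{align*}
X_{ij}\Om(N_0,u_0)=\Om(X_{ij}N_0,u_0)+\Om(N_0,X_{ij}u_0),
\end{align*}
and an analogous identity for $D\tilde\Om$.

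For part (a), the norm equivalence $\|f\|_{H^{s,1}_\omega}\sim\|f\|_{H^s}+\sum_{i<j}\|X_{ij}f\|_{H^s}$ (the $L^2_x$-counterpart of Lemma~\ref{lem:SO3}(b), which follows from $\De_\omega=\sum X_{ij}^2$) reduces the two claims to the static bilinear bounds
\begin{align*}
\|\Om(f,g)\|_{H^1}\les\|f\|_{L^2}\|g\|_{H^1},\qquad \|D\tilde\Om(f,g)\|_{L^2}\les\|f\|_{H^1}\|g\|_{H^1},
\end{align*}
which are Lemma 3.5 of \cite{GN}. Their proof relies on the Littlewood--Paley decomposition and the non-resonance estimate $|\omega|^{-1}\sim|\x|^{-1}\max(|\x|,\al)^{-1}$ on the regions $XL$ and $XL+LX$, together with Coifman--Meyer type bilinear multiplier bounds. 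The $L^\I_t$ consequence follows by taking pointwise supremum in time on the static inequalities.

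Part (b) proceeds identically: applying $X_{ij}$ to $\LR{D}\Om(N,u)$ under the mixed Strichartz--Besov norm produces two pieces, each falling under a radial mixed-norm estimate of the shape
\begin{align*}
\|\LR{D}\Om(f,g)\|_{L^2_t\dot B^{1/4+\e,1}_{q(\e)}}\les\|f\|_{L^\I_t H^0}\|g\|_{L^2_t B^{1,1}_6},
\end{align*}
established as Lemma 3.7 in \cite{GN} (together with its $\tilde\Om$ analogue). Since the projectors $P_k$ are radial and hence also commute with $X_{ij}$, the Besov structure on the right-hand side is preserved once the angular derivative is moved onto whichever factor is convenient.

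The only obstacle is the bookkeeping step of checking that the estimates in \cite{GN} are stated for arbitrary inputs (rather than for radial data), so that $X_{ij}N$, $X_{ij}u$ may be freely substituted. Once this is verified, the radial commutativity $[X_{ij},\Om]=[X_{ij},\tilde\Om]=[X_{ij},P_k]=0$ together with the Leibniz rule does all the work, and no new analysis of $\omega^{-1}$ or of the cutoffs $\cP_{XL}$, $\cP_{XL+LX}$ is required beyond what \cite{GN} already supplies.
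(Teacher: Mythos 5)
Your proposal matches the paper's proof essentially word for word: the paper reduces the Boundary-terms lemma to Lemma 3.5 and Lemma 3.7 of \cite{GN} by precisely the observation you make, namely that $X_{ij}$ satisfies the Leibniz rule $X_{ij}(fg)=X_{ij}f\,g+f\,X_{ij}g$ and commutes with the radial (bilinear) Fourier multipliers $\Om$, $\tilde\Om$, $P_k$, $\jb{D}$, so that $H^{s,1}_\omega$ norms split into $X_{ij}$-differentiated copies of the radial estimates (and the paper explicitly notes the argument is \emph{not} routed through Lemma~\ref{lem:bilinear}). The only cosmetic deviation is your attribution of Lemma~3.5 to part (a) and Lemma~3.7 to part (b); the paper invokes both of GN's Lemmas 3.5 and 3.7 collectively for the Boundary and Cubic terms lemmas, but this does not affect the substance.
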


\begin{lem}[Cubic terms] For any $N$ and $u$ we have
\begin{align*}
\|\Om(D|u|^2,u)\|_{L^1_tH^{1,1}_\omega}\les&
\|u\|_{L^2_t B^{1,1}_{6,\omega}}^2\|u\|_{L^\I_tH^{1,1}_\omega},\\
\|\LR{D}\Om(N,Nu)\|_{L^2_t\dot B^{0,1}_{6/5,\omega}}\les&
\|u\|_{L^2_t B^{1,1}_{6,\omega}}\|N\|^2_{L^\I_tH^{0,1}_\omega},\\
\|D\tilde\Om(Nu,u)\|_{L^1_tH^{0,1}_\omega}\les& \|u\|_{L^2_t
B^{1,1}_{6,\omega}}^2\|N\|_{L^\I_tH^{0,1}_\omega}.
\end{align*}
\end{lem}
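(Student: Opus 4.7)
I would follow the approach indicated in the text preceding the lemma: reduce each of the three cubic estimates to its scalar (non-angular) counterpart --- namely Lemma~3.7 of~\cite{GN} --- by invoking the rotation vector fields $X_{i,j}$ and the Leibniz rule, rather than by appealing to Lemma~\ref{lem:bilinear}.

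The starting point is that $\Om$, $\tilde\Om$, the dispersive factor $D$, and the Littlewood-Paley projections $P_k$ all have Fourier symbols that are invariant under simultaneous rotations of their Fourier arguments. Consequently each $X_{i,j}=x_i\p_j-x_j\p_i$ commutes with these operators, and together with the identity $X_{i,j}(fg)=(X_{i,j}f)g+f(X_{i,j}g)$ it distributes across the products on the left-hand side of each inequality. Combined with Lemma~\ref{lem:SO3}(b), which provides the equivalence
\[
 \|f\|_{H^{s,1}_{q,\omega}}\sim\|f\|_{H^s_q}+\sum_{1\leq i<j\leq 3}\|X_{i,j}f\|_{H^s_q}\qquad(1<q<\I),
\]
this reduces each of the three estimates to its scalar counterpart plus $O(1)$ additional terms in which exactly one $u$ or $N$ factor carries a single extra $X_{i,j}$.

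Concretely, for the first inequality one expands
\[
 X_{i,j}\Om(D|u|^2,u)=\Om\bigl(D((X_{i,j}\bar u)u),u\bigr)+\Om\bigl(D(\bar u\,X_{i,j}u),u\bigr)+\Om\bigl(D|u|^2,X_{i,j}u\bigr),
\]
bounds each summand by the scalar version of Lemma~3.7 of~\cite{GN}, and absorbs the $X_{i,j}$-decorated factor into the corresponding angular-regularity norm via $\|X_{i,j}g\|_{L^p_x}\les\|g\|_{H^{0,1}_{p,\omega}}$ (and analogously into $\dot B^{1,1}_{6,\omega}$). Summing over $i<j$ and combining with the scalar estimate itself produces the stated bound. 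The second and third inequalities go through identically, with $\LR{D}$ and $D$ treated as further radial multipliers that commute with $X_{i,j}$, and with the roles of $N$ and $u$ shuffled according to the trilinear structure.

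The only substantive issue beyond bookkeeping is justifying the commutations $[X_{i,j},\Om]=0$ and $[X_{i,j},\tilde\Om]=0$; these reduce to checking that the symbols $\cP_{XL}\om^{-1}$ and $\cP_{XL+LX}(|\xi-\eta|^2-|\eta|^2-\al|\xi|)^{-1}$ are rotation invariant, which holds because the cutoffs and the resonance denominators depend only on the moduli $|\xi|$, $|\eta|$, $|\xi-\eta|$. No new analytic input is required; in particular Lemma~\ref{lem:bilinear}, which was essential for the previous two lemmas because of their mixed $L^p_\rho L^q_\omega$ output norms, plays no role here, since the output norms in the present lemma reduce to pure $L^2_x$ mixed-norm spaces after the $X_{i,j}$-reduction.
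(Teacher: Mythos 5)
Your proposal matches the paper's intended proof exactly: the text immediately preceding the lemma states that the Cubic terms estimates are obtained from Lemma 3.7 of [GN] by applying the $X_{i,j}$ vector fields via the Leibniz rule and the commutation of $X_{i,j}$ with radial Fourier multipliers, and explicitly \emph{not} by Lemma~\ref{lem:bilinear}. Your observations about rotation invariance of the symbols of $\Om$ and $\tilde\Om$, the equivalence from Lemma~\ref{lem:SO3}(b), and why Lemma~\ref{lem:bilinear} is unnecessary here are all consistent with the paper's reasoning.
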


With these estimates, we can prove the scattering part of Theorem
\ref{thm1}, see \cite{GN}. For the wave operator, see the appendix
of \cite{GNW}. We omit the details.

\subsection*{Acknowledgment}
The authors would like to thank Sebastian Herr for the helpful discussion.
Z. Guo is supported in part by NNSF of China (No. 11001003, No.
11271023). C. Wang is supported by Zhejiang Provincial Natural
Science Foundation of China LR12A01002, the Fundamental Research
Funds for the Central Universities (2012QNA3002), NSFC 11271322 and
J1210038.

\end{document}